\documentclass[11pt]{amsart}
\usepackage[a4paper,margin=1in]{geometry}
\usepackage{amsmath,amssymb,mathtools,amsthm}
\usepackage{booktabs}
\usepackage{enumitem}
\usepackage{tikz}
\usetikzlibrary{arrows.meta,positioning,fit,calc,shapes.geometric,shapes.multipart}
\usepackage{microtype}
\usepackage{parskip}
\usepackage{xcolor}
\usepackage[numbers,sort&compress]{natbib}
\usepackage{hyperref}
\hypersetup{colorlinks=true,linkcolor=black,urlcolor=black,citecolor=black}

\title[Reentrant Value Fields]{Reentrant Value Fields as Delayed Coupled Reaction-Diffusion Systems on Finite Graphs}
\author{Karsten Bohlen}
\date{}

\subjclass[2000]{Primary 34K20, 35K57; Secondary 37L30, 93D05.}

\keywords{reentrant architecture, retarded functional differential equations, reaction-diffusion systems on graphs, compact global attractor, delay-independent stability, synthetic cognition, Lyapunov-Krasovskii functional}

\newtheorem{theorem}{Theorem}
\newtheorem{lemma}[theorem]{Lemma}

\newtheorem{proposition}[theorem]{Proposition}
\newtheorem{example}[theorem]{Example}
\newtheorem{blueprint}{Design Specification}

\newcommand{\state}{Z}
\newcommand{\leftstate}{H_L}
\newcommand{\rightstate}{X_R}
\newcommand{\controller}{\Theta}
\newcommand{\valuative}{Y}
\newcommand{\frontal}{P}
\newcommand{\memory}{M}
\newcommand{\world}{\mathcal W}
\newcommand{\selfmodel}{\mathcal S}
\newcommand{\meta}{\mathcal U}
\newcommand{\GL}{G_L}
\newcommand{\GR}{G_R}
\newcommand{\VL}{V_L}
\newcommand{\LGL}{\Delta_{\GL}(Q_L)}
\newcommand{\LGR}{\Delta_{\GR}(W_R)}
\newcommand{\Kcal}{\mathcal{K}}
\newcommand{\Xspace}{\mathcal{X}}
\newcommand{\Zspace}{\mathcal{Z}}

\newcommand{\taumax}{\tau_{\max}}
\newcommand{\tauone}{\tau_{R\to L}}
\newcommand{\tautwo}{\tau_{L\to R}}
\newcommand{\Rr}{\mathbb{R}}
\newcommand{\K}{\mathcal{K}}

\begin{document}
\maketitle

\begin{abstract}
We describe a dynamical system in which a symbolic field is coupled to a geometric field via a bipartite Hilbert-Schmidt kernel. The system is fully described by a retarded functional differential equation (RFDE) on the history space, subject to Lipschitz and small gain conditions. We show that the RFDE is well-posed under constant input and that it admits a compact global attractor. The principal subsystem $(H_L, X_R, P)$, which is comprised of the two primary fields as well as an executive field, is shown to be globally stable independent of delay, provided that the interfield coupling satisfies $C_{\mathcal{K}}^2<\mu_L\mu_R$. In addition, we describe design specifications that fulfill the hypotheses of the main Theorem. 
\end{abstract}

\section{Introduction}

We describe a system that is inspired by the cybernetic decision theory of G. E. Pugh \cite{pugh1977}, as well as the concept of reentry \cite{edelman2013reentry}.

This system consists of modules that correspond to perception, valuation, memory, control, and action. These modules are linked by the continuous exchange of latent representations. They can change their own state and share selected hidden content. A module also receives translated content from other components and updates its future dynamics. This repeated exchange leads to the emergence of stable patterns. These patters can influence what is noticed and avoided, as well as what is remembered and forgotten.

The full state vector takes the form
\[
Z(t) = (H_L, X_R, \Theta, Y, P, Q_L, W_R, M, A, \world, \selfmodel, \meta)(t),
\]

where the symbols stand for symbolic state ($H_L$), geometric state ($X_R$), and controller state ($\Theta$). In addition, we have the valuative state ($Y$), executive state ($P$), attention ($Q_L$), awareness ($W_R$) and memory ($M$). Additional elements are the external action ($A$), world model ($\world$), self-model ($\selfmodel$) and meta-control modules ($\meta$), which are further specified in the appendix. The state vector used in the main body of the paper concerns more restricted principal subsystems and is specified in the main Stability section of the paper. 

Each of the modules maintains a history buffer from which it reads delayed coupling signals. The interconnector contains the propagation delays $\tau_{R \to L}$ and $\tau_{L \to R}$. The resulting structure is that of a retarded functional differential equation (RFDE), which is a differential equation whose derivative of the state depends on the current state and on state values at fixed past times, see also \citep{hale1993introduction,hale1988asymptotic,diekmann1995delay}. 

The symbolic and geometric field have the following additional structure: $H_L \in \Rr^{T \times d_L}$ is indexed by sequence positions $\{1, \cdots, T\}$, and $X_R$ is indexed by the nodes of a graph $G_R$. The symbolic field is modeled on the graph $G_L = (V_L, E_L)$, where $V_L = \{1, \cdots, T\}$ are the token-position nodes and the edge weights $Q_L \colon E_L \to \Rr_{> 0}$ are the precision weights. The geometric field similarly is modeled on a graph $G_R = (V_R, E_R)$ with awareness weights $W_R$. In the RFDE formulation, both fields diffuse via weighted Laplacians on finite graphs. They receive delayed coupling signals from each other, and contribute to the principal small-gain margin. The main equation consists of a combination of two graph Laplacians $\Delta_{G_L}(Q_L)$ and $\Delta_{G_R}(W_R)$ plus the bipartite coupling kernel $\K$. 

\begin{figure}[ht]
\centering
\begin{tikzpicture}[
  node distance=13mm and 18mm,
  box/.style={draw, rounded corners, align=center, minimum width=29mm, minimum height=9mm,
    font=\small},
  smallbox/.style={draw, rounded corners, align=center, minimum width=24mm,
    minimum height=8mm, font=\small},
  arrow/.style={-{Latex[length=2mm]}, thick},
  mod/.style={-{Latex[length=2mm]}, thick, dashed}
]
\node[box] (env) {World / Body};
\node[box, below left=of env] (right) {Geometric Module\\sparse graph};
\node[box, below=of env] (thal) {Controller\\routing};
\node[box, below right=of env] (left) {Symbolic Module\\dense state};
\node[box, below=of thal] (value) {Valuative System\\modulation};
\node[smallbox, below left=of value] (memory) {Memory};
\node[smallbox, below=of value] (exec) {Executive\\Control};
\node[smallbox, below right=of value] (action) {Action Gate};
\node[smallbox, below=of exec] (meta) {Self / World Models\\Meta-learning};

\draw[arrow] (env) -- (right);
\draw[arrow] (env) -- (thal);
\draw[arrow] (env) -- (left);
\draw[arrow] (right) to[bend left=8] (thal);
\draw[arrow] (left) to[bend right=8] (thal);
\draw[arrow] (thal) to[bend left=8] (right);
\draw[arrow] (thal) to[bend right=8] (left);
\draw[mod] (value.west) to[out=180, in=270, looseness=1.3] (right.south);
\draw[mod] (value.east) to[out=0, in=270, looseness=1.3] (left.south);
\draw[mod] (value) -- (thal);
\draw[arrow] (memory) -- (value);
\draw[arrow] (value) -- (exec);
\draw[arrow] (exec) -- (action);
\draw[arrow] (meta) -- (exec);
\draw[arrow] (meta.east) to[out=0, in=0, looseness=2.2] (thal.east);
\draw[arrow] (memory) -- (meta);
\draw[arrow] (exec) -- (meta);
\end{tikzpicture}
\caption{A minimal reentrant architecture. Solid arrows denote information flow; dashed
arrows denote modulation. The controller selects and routes latent contents. The valuative
system changes gain, learning, and priority.}
\end{figure}

\section{Reaction-Diffusion Model}

The latent representations of the model are treated as fields over representational spaces which are the finite base graphs. The coarse graining that we describe in the appendix preserves the main control variables of the system.

\subsection*{Base graphs}

We define a base space for each of the two primary fields. The idea is to view both fields in a unified manner, by having both live and evolve on finite graphs with possible additional structure that is specific to an architecture.

\subsubsection*{Sequence graph $G_L$}

Let $G_L = (V_L, E_L)$ be a finite connected graph with node set $V_L = \{1, \cdots, T\}$ (token positions) and edge set $E_L$ containing sequential edges $(s, s+1)$ and $(s+1, s)$ for adjacent positions, together with skip edges that encode positional structure. Each edge carries a positional encoding label $p(s, s') \in \Rr^{d_{\mathrm{pos}}}$ which is an architecture constant. 

\subsubsection*{Scene graph $G_R$}

Let $G_R = (V_R, E_R)$ be a finite connected graph with node set $V_R$ (encoding geometric entities) and edge set $E_R$. We endow each edge $(i, j)$ with a fixed rigid-frame label $g_{ij} = (R_{ij}, t_{ij}) \in SE(d)$ which is viewed as an architecture constant. These edge labels encode the geometric relationships among nodes.

\newpage

\subsection*{Primary fields}

\subsubsection*{Symbolic field}

$H_L \colon V_L \to \Rr^{d_L}$ is a section of the trivial vertex bundle over $G_L$, equivalently $H_L \in \Rr^{T \times d_L}$. The precision weights $Q_L \colon E_L \to \Rr_{> 0}$ define the weighted graph Laplacian
\[
\bigl(\LGL\,\leftstate\bigr)_s
=\sum_{s'\colon(s,s')\in E_L}Q_L(s,s')(h_s - h_{s'}),
\]
which acts as precision-weighted diffusion on $\leftstate$.  In the formal symmetric
regime, connectivity of $\GL$ and $Q_L\geq\varepsilon_Q>0$ give the spectral gap
$\lambda_2(\LGL)\geq\varepsilon_Q\lambda_2(G_L)>0$.

\subsubsection*{Geometric field.}  $\rightstate\colon V\to\mathbb{R}^{d_R}$ is a section of the
trivial vertex bundle over $\GR$, equivalently $\rightstate\in\mathbb{R}^{|V|\times d_R}$.
The rigid-frame labels $g_{ij}$ provide scalar invariant edge features used by the
reaction term, such as $\|t_{ij}\|^2$, traces of rotation features, and prescribed inner
products. Because the node embeddings $e_i$ do not transform under $\mathrm{SE}(d)$, the
geometric feature field is $\mathrm{SE}(d)$-invariant.  The awareness weights
$W_R\in\prod_j\Delta^{k_j}$ define the weighted graph Laplacian
\[
\bigl(\LGR\,\rightstate\bigr)_j
=\sum_{i\colon(i,j)\in E}W_{R,ij}(e_j - e_i),
\]
For the formal dissipativity estimates, the associated symmetric conductance field is
$\overline W_{ij}=\tfrac12(W_{R,ij}+W_{R,ji})$.  The spectral gap assumption is imposed on
the Laplacian defined by $\overline W$: if $\overline W_{ij}\geq\varepsilon_W$ on a
connected support, then $\lambda_2(\Delta_{\GR}(\overline W))\geq\varepsilon_W\lambda_2(G_R)>0$.

\subsection*{Coupling kernel}

A coupling kernel $\Kcal\colon\VL\times V_R\to\mathbb{R}^{d_L\times d_R}$ induces two
Hilbert-Schmidt operators:
\begin{align*}
\Kcal  &\;\colon\; L^2(\GR,\mathbb{R}^{d_R})\to L^2(\GL,\mathbb{R}^{d_L}),
&(\Kcal\rightstate)_\ell&=\sum_{i\in V}\Kcal(\ell,i)\,e_i,\\
\Kcal^* &\;\colon\; L^2(\GL,\mathbb{R}^{d_L})\to L^2(\GR,\mathbb{R}^{d_R}),
&(\Kcal^*\leftstate)_i&=\sum_{\ell\in\VL}\Kcal(\ell,i)^\top h_\ell,
\end{align*}
with HS norm $\|\Kcal\|_{\mathrm{HS}}^2=\sum_{\ell,i}\|\Kcal(\ell,i)\|_F^2\leq C_\Kcal^2$.
In practice the coupling uses state-dependent attention weights $\alpha_{\ell i}(Z)\in
\Delta^{|V|-1}$ and $\beta_{i\ell}(Z)\in\Delta^{T-1}$:
\begin{align*}
C_{R\to L,\ell}(t) &= \sum_{i\in V}\alpha_{\ell i}(Z(t))\,\Kcal(\ell,i)\,e_i(t-\tau_{R\to L}),\\
C_{L\to R,i}(t) &= \sum_{\ell\in\VL}\beta_{i\ell}(Z(t))\,\Kcal(\ell,i)^\top h_\ell(t-\tau_{L\to R}).
\end{align*}
Since $\alpha_{\ell i}\in[0,1]$, the state-dependent operators satisfy
$\|\Kcal_\alpha(Z)\|_{\mathrm{HS}}\leq C_\Kcal$ uniformly in $Z$, and likewise for
$\Kcal^*_\beta(Z)$. The Lipschitz bounds
\[
  \|\Kcal_\alpha(Z)-\Kcal_\alpha(Z')\|_{\mathrm{op}}\leq L_\alpha\|Z-Z'\|,
  \qquad
  \|\Kcal_\beta^*(Z)-\Kcal_\beta^*(Z')\|_{\mathrm{op}}\leq L_\beta\|Z-Z'\| .
\]
are needed for the well-posedness and attractor compactness. The global stability
theorem below is stated for the closed principal regime in which the interfield operators
are fixed bounded Hilbert-Schmidt operators $\Kcal$ and $\Kcal^*$ with norm at most $C_\Kcal$.

\subsection*{Master equation}

The full state is
$Z=(\leftstate,\rightstate,Q_L,W_R,\mathcal{R}_\controller,\valuative,\frontal,\memory,
\{\rho_i\},\{z_i\},\{\theta_i\})\in\Zspace$.
Let $\mathcal E$ be the finite-dimensional Euclidean product space containing all
components of $Z$, and let $\Zspace\subset\mathcal E$ denote the compact admissible
state set.  The ambient history space and the admissible phase space are
\[
  \Xspace_{\mathcal E}:=C([-\taumax,0],\mathcal E),
  \qquad
  \Xspace:=C([-\taumax,0],\Zspace)\subset \Xspace_{\mathcal E},
\]
both equipped with the sup norm on histories.  The ambient space
$\Xspace_{\mathcal E}$ is a Banach space, while $\Xspace$ is a closed complete
metric subspace.  We set
$\taumax=\max(\tau_{R\to L},\tau_{L\to R})$.  We consider the RFDE
\begin{equation}\label{eq:RFDE}
\dot{Z}(t)=\mathcal{D}(Z(t))+\mathcal{N}(Z(t),u(t))+\mathcal{C}(Z(t),Z(t-\tau_1),Z(t-\tau_2)),
\end{equation}
with $\tau_1=\tau_{R\to L}$, $\tau_2=\tau_{L\to R}$. The equation is composed of the following blocks.

\textit{Diffusion block} $\mathcal{D}(Z)$:
\[
\mathcal{D}(Z)=\bigl(-\LGL\,\leftstate,\;-\LGR\,\rightstate,\;-\kappa_Y\valuative,\;0,\ldots\bigr).
\]

\textit{Reaction block} $\mathcal{N}(Z,u)$: the instantaneous nonlinear updates
$F_L,F_R,G_Y$ and the auxiliary field updates $\Psi_{Q_L},\Psi_{W_R},\Psi_{\mathcal{R}},
\mathcal{P},\Phi_\memory$, and the learning dynamics
$-\lambda_z z_i+\nabla_{\theta_i}\log\pi_i$,
$-\lambda_{\mathrm{reg}}\theta_i+\eta_i\delta z_i$.
All components are evaluated at the current time $t$.
The continuous-time reaction terms are defined as residuals of the transformer maps:
$F_L(\leftstate,\ldots):=\mathcal{T}_L(\leftstate,\ldots)-\leftstate$ for the symbolic
field, and $F_R(\rightstate,\ldots):=\mathcal{G}_R(\rightstate,\ldots)-\rightstate$ for
the geometric field. The remaining auxiliary fields are assigned continuous-time vector
fields
\[
  \Psi_{Q_L},\quad \Psi_{W_R},\quad \Psi_{\mathcal R},\quad \Phi_\memory,
  \quad \Psi_{\rho_i},\quad \Psi_{z_i},\quad \Psi_{\theta_i},
\]
which are globally Lipschitz on $\Zspace$ and point into the tangent cone of the relevant
compact domain.  The displayed stagewise rules are explicit Euler or operator-splitting
discretizations of these vector fields.

\textit{Delayed coupling block} $\mathcal{C}$: the only terms depending on past state,
\[
\mathcal{C}(Z(t),Z(t-\tau_1),Z(t-\tau_2))
=\bigl(\Kcal_\alpha(Z(t))\,\rightstate(t-\tau_1),\;\;\Kcal^*_\beta(Z(t))\,\leftstate(t-\tau_2),\;\;0,\ldots\bigr).
\]
The equation is \emph{retarded} (as opposed to neutral), i.e. the right-hand side depends on $Z(t+\theta)$ for $\theta\leq 0$ as opposed to $\dot{Z}(t+\theta)$ for $\theta<0$. The initial data is given by a history segment $Z_0\in\Xspace$.

For constant input the RFDE defines a continuous semiflow on $\Xspace$.  In the standard
history representation the shift derivative acts on $[-\taumax,0)$ and the RFDE boundary
condition is imposed at zero, cf. \citet{hale1993introduction}, Chapter~2, and \citet{diekmann1995delay}, Chapter~I.

\subsection*{Dissipativity constants and stability condition}

Let $\mu_L,\mu_R,\mu_P>0$ denote the one-sided dissipativity constants of the principal
instantaneous $H_L$, $X_R$, and $P$ vector fields. The constants $\mu_L$ and $\mu_R$
include the corresponding diffusion and reaction contributions and exclude the delayed
interfield coupling. We have fixed interfield operators whose operator norms are bounded by $C_\Kcal$ and we have the small-gain condition
\begin{equation}\label{eq:SC}
C_\Kcal^2 \;<\; \mu_L\,\mu_R
\end{equation}
which is the positivity condition for the Schur margin of the coupled
principal fields. The executive constant $\mu_P$ contributes separately to convergence of
the $P$ component.

\subsubsection*{Radial margin conditions}

The stability condition~\eqref{eq:SC} controls convergence near equilibrium. A separate
condition controls viability at the compact boundary of the principal balls. To wit, at $\|\leftstate\|_F=R_L$ the instantaneous reaction term must overcome the maximum outward contribution of the delayed coupling. A sufficient condition is
\begin{equation}\label{eq:radialmargin}
  \eta_L\ge R_L\,C_\Kcal\,R_R,\qquad \eta_R\ge R_R\,C_\Kcal\,R_L,
\end{equation}
where $\eta_L,\eta_R>0$ are the radial damping margins of the reaction terms $F_L$ and
$F_R$ at the respective ball boundaries (i.e.\ $\langle \leftstate,-\Delta_{\GL}(Q_L)\leftstate
+F_L\rangle\le-\eta_L$ when $\|\leftstate\|_F=R_L$, and analogously for $F_R$). The
delayed coupling contributes at most $R_L C_\Kcal R_R$ to the outward radial velocity, so
\eqref{eq:radialmargin} guarantees that the combined inward radial velocity is nonpositive on
the boundary. When the principal fields are implemented as projected relaxation flows, the
margin condition is satisfied automatically by the projection. In the case where they are implemented as explicit Laplacian-plus-reaction systems, the reaction term $F_L$ must be sufficiently
dissipative to enforce~\eqref{eq:radialmargin}.

\subsubsection*{Standing assumption (fixed dimensions and finite delay horizon)}

Throughout this paper, the sequence length $T$, the graph node count $|V|$, the edge count $|E|$, the
subsystem count $n_s$, and the delay scalars $\tau_{R\to L}$, $\tau_{L\to R}\geq 0$ are
fixed finite architecture constants. Both graphs $\GL$ and $\GR$ are assume to be connected. The
maximum delay is set as $\taumax=\max(\tau_{R\to L},\tau_{L\to R})$. We note that architectures with dynamic token length, variable graph size, or online-adapted delays require a separate analysis.

\subsection*{Uniformized master equation}

The two graph fields $\leftstate$ on $\GL$ and $\rightstate$ on $\GR$, together with the
bipartite coupling kernel $\Kcal$, can be assembled into a single field on the bipartite
graph $G_{LR}=\GL\cup\GR$ whose bipartite edges are indexed by the support of $\Kcal$.
The joint primary field $Z_{\mathrm{field}}=(\leftstate,\rightstate)$ lives on $G_{LR}$ and the auxiliary field $Z_{\mathrm{aux}}=(Q_L,W_R,\mathcal{R}_\controller,\valuative,\frontal,\memory,\{\rho_i\},\{z_i\},\{\theta_i\})$ contains the remaining components. The RFDE~\eqref{eq:RFDE} then has the block form
\begin{align}
\partial_t Z_{\mathrm{field}}(t)
  &= -\Delta_{G_{LR}}(w)\,Z_{\mathrm{field}}(t)
    +F(Z_{\mathrm{field}}(t),Z_{\mathrm{aux}}(t),u(t))
    +\mathcal{C}_\tau[Z_{\mathrm{field},t}],\label{eq:uniformized}\\
\partial_t Z_{\mathrm{aux}}(t)
  &= B(Z_{\mathrm{field}}(t),Z_{\mathrm{aux}}(t),u(t)),\notag
\end{align}
where the block Laplacian $\Delta_{G_{LR}}(w)$ has diagonal blocks $\LGL$ and $\LGR$ and
zero off-diagonal blocks and the coupling enters through $\mathcal{C}_\tau$. The combined edge-weight tuple is $w=(Q_L,W_R)$, and $\mathcal{C}_\tau$ groups the delayed cross-coupling terms. This form shows that $\leftstate$ and $\rightstate$ are instances of a single object which is given by a
dissipative graph field driven by a weighted Laplacian and a bipartite delayed coupling, see also Section~\ref{sec:coarsegrained}. As $T\to\infty$ and $|V|\to\infty$, this bipartite-graph RFDE
can be shown to approach a partial FDE on $L^2([0,1],\mathbb{R}^{d_L})\times L^2(\Omega,\mathbb{R}^{d_R})$; see \citet{wu1996partial} for well-posedness of the continuum limit.

\subsection*{Role of the design specifications}

Standard known transformer or graph-neural modules fail to fit the required hypotheses of the theory. We therefore include design specifications in the appendix which describe the design choices needed in order to have a stabilized architecture. One instance of such a specification involves the residual backbones which are needed for connectivity and mixing constants for the two primary (symbolic / geometric) fields. These have corresponding overlays that give us the task-dependent selectivity. The viability conditions stated in the main body of the paper are fulfilled by the use of projection onto Frobenius balls. These projections preserve the direction of an overlarge matrix-valued state and rescale its amplitude to the admissible radius. This procedure prevents unbounded amplification and yields a continuous self-map of the compact domain. It does however disregard the magnitude of the state. In practice, the radii are chosen so that projection is normally inactive or weakly active. As an alternative, hard projection may be replaced by smooth norm saturation. In the table below we summarize the other mechanisms that might be employed to turn a given neural network based architecture into a stabilized candidate that accords to the hypotheses.

\begin{center}
\begin{tabular}{p{0.38\linewidth}p{0.52\linewidth}}
\toprule
\textbf{Design mechanism} & \textbf{Admissibility contribution} \\
\midrule
Bounded parameterizations &
Compactness of the state domain \(\mathcal Z\). \\[0.35em]

Projected or relaxation-type updates &
Tangent-cone viability and positive invariance of \(\mathcal Z\). \\[0.35em]

Residual symbolic backbone \(Q_L^{\mathrm{base}}\) &
Symbolic spectral gap and dissipativity of the left field. \\[0.35em]

Residual geometric backbone \(W_R^{\mathrm{base}}\) &
Awareness spectral gap and dissipativity of the right geometric field. \\[0.35em]

Sparse learned overlays \(\widetilde Q_L\), \(\widetilde W_R\) &
Task-dependent selectivity without placing the connectivity burden on learned attention or awareness alone. \\[0.35em]

Bounded Hilbert--Schmidt kernels &
Controlled reentrant coupling between symbolic and geometric fields. \\[0.35em]

Small-gain constraints &
Stability of delayed symbolic--geometric feedback loops. \\[0.35em]

Bounded gates and modulation variables &
Controlled variation of coupling, valuation, memory, and policy influence. \\[0.35em]

Simplex-preserving attention, awareness, and routing maps &
Admissible probabilistic weights and preservation of normalized allocation variables. \\[0.35em]

Regularized or projected policy drift &
Bounded adaptive parameters and tangent-cone compatibility for policy variables. \\[0.35em]

Lipschitz neural modules on compact domains &
RFDE well-posedness and continuous dependence on histories. \\[0.35em]

Damping, leak, and residual relaxation terms &
Dissipativity, absorbing estimates, and compatibility with attractor arguments. \\
\bottomrule
\end{tabular}
\end{center}

\section{Stability Criteria}\label{sec:formal_closure}

This section assembles the specification conditions from the appendix into formal hypotheses which then yield the main results.
On the other hand we also describe the discrete-time implementation which is recorded as a numerical discretization of the RFDE. The distinction between both viewpoints (fields vs. discrete time system) should be kept in mind.

Whenever a spectral gap, $\lambda_2$, or graph-Laplacian dissipativity estimate is used,
the relevant Laplacian is the symmetric conductance Laplacian. Thus either the graph
weights are symmetric, or the estimate is applied to the symmetrized conductance field.  For
the geometric awareness field this means
\[
  \overline W_{R,ij}:=\tfrac12(W_{R,ij}+W_{R,ji}),
\]
and the gap is imposed on $\Delta_{\GR}(\overline W_R)$. The stability constants
$\mu_L,\mu_R$ are derived from symmetric or symmetrized dissipative operators, or else
assumed directly as one-sided dissipativity constants.

\textit{Assumptions.}
The main results rely on the following assumptions.
\begin{enumerate}[label=(A\arabic*),leftmargin=*]
\item \emph{Fixed finite architecture and finite delays.} The sequence length $T$, graph
  node count $|V|$, edge count $|E|$, subsystem count $n_s$, and delay scalars
  $\tau_{R\to L},\tau_{L\to R}\geq0$ are fixed finite constants.
\item \emph{Compact closed state domain.} The state domain $\Zspace$ is the compact product
  domain displayed below, with bounded Euclidean components, floored positive weights, and
  simplex-valued routing components.
\item \emph{Global Lipschitz RFDE vector field on histories.} The RFDE right-hand side
  $\mathcal F:\Xspace\to\mathcal E$ is globally Lipschitz on the admissible history
  space $\Xspace=C([-\taumax,0],\Zspace)$ and admits a locally Lipschitz extension to
  an open neighbourhood of $\Xspace$ in the ambient Banach history space
  $\Xspace_{\mathcal E}=C([-\taumax,0],\mathcal E)$.  Equivalently, after componentwise
  Whitney-McShane extension in the finite-dimensional target $\mathcal E$, the Picard theorem is
  applied in $\Xspace_{\mathcal E}$ and the tangent-cone condition below selects the
  admissible subset $\Xspace$.
\item \emph{Positive invariance.} The continuous-time vector field points into the tangent
  cone of $\Zspace$ at boundary points.
\item \emph{Joint non-emptiness.} The core class is nonempty by the minimal construction below.  For
richer architectures, we impose joint admissibility as a modular condition: each added
module must preserve compactness, Lipschitz continuity, tangent-cone viability, and the
relevant dissipativity or bounded-coupling estimates.
\item \emph{Equilibrium existence for constant input.} For $u\equiv u^*$ the RFDE has at
  least one equilibrium $Z^*\in\Zspace$.
\end{enumerate}

\begin{enumerate}[label=(A\arabic*),leftmargin=*,start=7]
\item \emph{Closed principal stability regime.} For Theorem~\ref{thm:stability}, the
  delayed interfield operators are fixed bounded Hilbert-Schmidt operators $\Kcal$ and
  $\Kcal^*$ with norms at most $C_\Kcal$. The instantaneous principal dynamics are one-sided dissipative with constants $\mu_L, \mu_R$, and $\mu_P$.
\end{enumerate}

\textit{Remark.}
\emph{a)} \textit{Equilibrium existence.}
When the full RFDE is written in relaxation form $\dot Z=\Gamma(Z_t)(T(Z_t)-Z)$ on
compact convex $\Zspace$, every fixed point $Z^*=T(Z^*)$ of the stationary target map
$T:\Zspace\to\Zspace$ is an equilibrium.  If $T$ is continuous, Brouwer's fixed-point
theorem gives at least one such fixed point.

\emph{b)} \textit{Viability condition.} Condition (A4) is realistic when the continuous-time dynamics are implemented as projected or relaxation-type flows, or through softmax/logit parameterizations for simplex variables. We note that, unless additional damping, projection, or radius-margin conditions are imposed, the delayed coupling terms and unprojected policy or memory drifts can point outward at the boundary. The condition means that for every admissible history $\varphi \in C([-\tau_{\mathrm{max}}, 0], \mathcal{Z})$, the RFDE vector field must satisfy $\mathcal{F}(\varphi, u^{\ast}) \in T_{\mathcal{Z}}(\varphi(0))$, where the tangent cone is taken at the present state $\varphi(0)$, and $\mathcal{F}$ may depend on the delayed history. Componentwise, this means that ball-valued variables have nonpositive outward radial velocity at their boundary, box-valued variables point inward at their lower and upper faces, and simplex-valued variables preserve total mass while assigning nonnegative velocity to zero coordinates.\\

\begin{lemma}\label{lem:viability}
Since $\Zspace$ is a product of elementary convex sets, the tangent cone to $\Zspace$ at
$Z$ is the product of the component cones. Condition~(A4) decomposes into the following
tests, all of which must hold simultaneously at boundary points.
\begin{enumerate}[label=(\roman*)]
\item \emph{Ball component} $x\in B_R$:
  \[
    \|x\|=R\;\Longrightarrow\;\langle x,\dot x\rangle\le0.
  \]
\item \emph{Box component} $q\in[a,b]$:
  \[
    q=a\;\Rightarrow\;\dot q\ge0,\qquad q=b\;\Rightarrow\;\dot q\le0.
  \]
\item \emph{Simplex component} $w\in\Delta^m$:
  \[
    \textstyle\sum_i\dot w_i=0,\qquad w_i=0\;\Rightarrow\;\dot w_i\ge0.
  \]
\end{enumerate}
\end{lemma}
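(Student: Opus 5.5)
The argument has two parts. First, the tangent cone of a product of closed convex sets is the product of the component tangent cones. Second, each elementary cone has an explicit description, which yields tests (i)--(iii).

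\textbf{Step 1: product formula.} For a closed convex set $C$ and a point $x\in C$, use the convex-analysis description
\[
T_C(x)=\overline{\textstyle\bigcup_{h>0} h^{-1}(C-x)}.
\]
Let $\Zspace=\prod_k C_k$ and $Z=(z_k)_k$.

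\emph{Inclusion $T_{\Zspace}(Z)\subseteq\prod_k T_{C_k}(z_k)$.} This follows by projecting onto each factor: projection is linear and continuous, so it maps the cone into the cone and the closure into the closure.

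\emph{Reverse inclusion.} Take $v_k\in T_{C_k}(z_k)$. By convexity, $z_k+h v_k'\in C_k$ for approximants $v_k'$ close to $v_k$ and for all $h\in(0,h_k]$, since $C_k-z_k$ is star-shaped about $0$. The number of factors is finite (assumption (A1) fixes $T$, $|V|$, $|E|$, $n_s$). So we may take the common step $h=\min_k h_k>0$. Then $Z+h(v_k')_k\in\Zspace$, and passing to the limit gives $(v_k)_k\in T_{\Zspace}(Z)$.

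It follows that (A4), i.e.\ $\mathcal F(\varphi,u^*)\in T_{\Zspace}(\varphi(0))$, holds iff each component of $\mathcal F$ lies in the corresponding component cone. At interior components the cone is the whole space, so the test is vacuous there. This is why only boundary points impose conditions.

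\textbf{Step 2: elementary cones.}

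\emph{(i) Ball.} For the ball $B_R$ with $\|x\|=R$, I claim $T_{B_R}(x)=\{v:\langle x,v\rangle\le 0\}$.
\begin{itemize}
\item[$\subseteq$:] If $x+hv\in B_R$, then $2h\langle x,v\rangle+h^2\|v\|^2\le 0$. Dividing by $h$ and letting $h\to0$ gives $\langle x,v\rangle\le0$; the closure preserves this.
\item[$\supseteq$:] If $\langle x,v\rangle<0$, then $\|x+hv\|^2<R^2$ for small $h$. The case $\langle x,v\rangle=0$ follows by taking the closure of such $v$.
\end{itemize}

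\emph{(ii) Box.} An interval $[a,b]$ with $a<b$ (guaranteed by the positive floors in (A2)) is a one-dimensional instance of the same argument. At $q=a$ the cone is $[0,\infty)$, and at $q=b$ it is $(-\infty,0]$. Vector-valued boxes are products, so Step 1 applies again coordinatewise.

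\emph{(iii) Simplex.} The simplex $\Delta^m$ is the intersection of the affine hyperplane $\{\sum w_i=1\}$ with the orthant. For a polyhedron, the tangent cone is obtained by linearizing the active constraints:
\[
T_{\Delta^m}(w)=\Big\{v:\ \textstyle\sum_i v_i=0,\ \ v_i\ge0 \text{ whenever } w_i=0\Big\}.
\]
\begin{itemize}
\item[$\subseteq$:] Immediate from $w+hv\in\Delta^m$.
\item[$\supseteq$:] For $h$ small, inactive coordinates stay positive and active ones are nonnegative. No closure is needed because the set of feasible directions of a polyhedron is already closed.
\end{itemize}

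\textbf{Main obstacle.} The only delicate point is the reverse inclusion in Step 1 together with the closure in the ball case. Feasible directions for the ball are not closed: tangential directions with $\langle x,v\rangle=0$ are not feasible for any $h>0$. So one must work with the closed tangent cone throughout. The uniform step size is then handled by finiteness of the product and convexity, as above.

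Finally, "all tests must hold simultaneously" is exactly membership in the product cone from Step 1, which proves the lemma.
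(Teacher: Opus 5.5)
Your proof is correct and takes the same route as the paper. The paper gives no separate argument: it justifies the lemma only by the product formula for tangent cones of closed convex sets (invoked again in the proof of Proposition~\ref{prop:relaxation}) and by the standard descriptions of the ball, box, and simplex cones, and you have supplied exactly the details it omits. Two harmless imprecisions: the simplex has empty interior in its ambient space, so $\sum_i\dot w_i=0$ must hold at every point rather than only at ``boundary'' points, which your Step~2(iii) already handles correctly; and the floors in (A2) do not by themselves force $a<b$, though the degenerate case $a=b$ gives the cone $\{0\}$, which is again exactly test (ii).
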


\begin{lemma}\label{lem:rfde_viability}
Let $\mathcal E$ be finite-dimensional, let $\Zspace\subset\mathcal E$ be closed and
convex, and let $\Xspace=C([-\taumax,0],\Zspace)$.  Suppose
$\mathcal F:\Xspace\to\mathcal E$ is continuous, admits a locally Lipschitz extension to
the ambient history space $\Xspace_{\mathcal E}=C([-\taumax,0],\mathcal E)$, and satisfies
\[
  \mathcal F(\varphi)\in T_{\Zspace}(\varphi(0))
  \qquad\text{for every }\varphi\in\Xspace .
\]
Then every classical RFDE solution of $\dot Z(t)=\mathcal F(Z_t)$ with initial history in
$\Xspace$ remains in $\Zspace$ for all forward times for which it exists.
\end{lemma}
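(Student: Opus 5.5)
The strategy is to reduce the delayed problem to a projected problem whose solutions stay in $\Zspace$ by construction, and then use uniqueness to identify it with the given solution. Let $\Pi:\mathcal E\to\Zspace$ be the metric projection onto the closed convex set $\Zspace$; it is $1$-Lipschitz. Define the modified history functional
\[
  \widetilde{\mathcal F}(\psi):=\mathcal F(\Pi\circ\psi),\qquad \psi\in\Xspace_{\mathcal E},
\]
where $\Pi\circ\psi$ is taken pointwise in $\theta\in[-\taumax,0]$. Then $\Pi\circ\psi\in\Xspace$, and the map $\psi\mapsto\Pi\circ\psi$ is $1$-Lipschitz in the sup norm. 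Hence $\widetilde{\mathcal F}$ is continuous and locally Lipschitz wherever $\mathcal F$ is; here the locally Lipschitz extension hypothesis restricted to $\Xspace$ is what is needed. On admissible histories $\widetilde{\mathcal F}=\mathcal F$, and the tangent-cone hypothesis gives $\widetilde{\mathcal F}(\psi)\in T_{\Zspace}(\Pi\psi(0))$ for every $\psi$.

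The key step is a distance estimate. Let $Z$ solve $\dot Z=\widetilde{\mathcal F}(Z_t)$ with $Z_0\in\Xspace$, and set $\delta(t)=\tfrac12\operatorname{dist}(Z(t),\Zspace)^2$. This function is $C^1$, with gradient $Z-\Pi Z$. Therefore
\[
 \dot\delta(t)=\langle Z(t)-\Pi Z(t),\,\widetilde{\mathcal F}(Z_t)\rangle\le 0 .
\]
The inequality holds because $Z-\Pi Z$ lies in the normal cone $N_{\Zspace}(\Pi Z)$, which is polar to $T_{\Zspace}(\Pi Z)$, and $\widetilde{\mathcal F}(Z_t)$ lies in that tangent cone. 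Since $\delta(0)=0$, we get $\delta\equiv0$, so $Z(t)\in\Zspace$ for all $t$. This uses only convexity and finite dimensionality, in the form of the polarity between tangent and normal cones of a convex set.

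To conclude, take a classical solution $Z$ of the original RFDE with $Z_0\in\Xspace$ on $[0,t_1)$. Let $\widetilde Z$ be the solution of the modified equation with the same initial history. By the previous step $\widetilde Z_t\in\Xspace$, so $\widetilde Z$ also solves the original equation. Uniqueness for locally Lipschitz RFDEs (Hale, Ch.~2) then gives $Z=\widetilde Z$ on the common interval. A standard continuation argument shows $\widetilde Z$ exists at least as long as $Z$: if $\widetilde Z$ ceased to exist earlier, it would blow up or leave the domain, which contradicts its agreement with $Z$.

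The main obstacle is this uniqueness step. The Lipschitz extension is only assumed near $\Xspace$, and the original solution $Z$ is not yet known to stay in $\Xspace$. I would handle this in two parts. First, take the original solution's right-hand side to be the given extension $\widehat{\mathcal F}$ on a neighbourhood. Second, apply Gronwall to $\|Z(t)-\widetilde Z(t)\|$ while both histories remain in that neighbourhood. This uses $\widehat{\mathcal F}=\widetilde{\mathcal F}$ on $\Xspace$ and local Lipschitzness of $\widehat{\mathcal F}$ at points of $\Xspace$. A connectedness (sup-of-agreement-times) argument then extends the agreement to the full existence interval.
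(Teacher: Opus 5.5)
Your argument is correct, and it takes a different and more complete route than the paper. The paper invokes a Nagumo-type viability theorem in the retarded setting. It paraphrases this as the tangent-cone condition at the present state $\varphi(0)$ ``preventing a first exit through any boundary face,'' and then appeals to the product-cone decomposition of Lemma~\ref{lem:viability}.

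You instead prove the statement directly for an arbitrary closed convex $\Zspace$, with no product structure needed, in three steps:
\begin{enumerate}
\item Composing with the metric projection gives a functional that is locally Lipschitz on all of $\Xspace_{\mathcal E}$ and satisfies the tangent-cone condition at $\Pi\psi(0)$ for every history.
\item By normal/tangent-cone polarity, $\tfrac12\operatorname{dist}(Z(t),\Zspace)^2$ is nonincreasing along the projected solution, so that solution stays in $\Zspace$.
\item Uniqueness for the locally Lipschitz extension identifies the projected solution with the given one.
\end{enumerate}

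The paper's route buys brevity and access to the general viability theory for retarded equations. Yours buys self-containment and shows where each hypothesis is used: convexity through $\Pi$ and polarity, local Lipschitz continuity through uniqueness.

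The uniqueness step matters. The tangent-cone condition is imposed only on admissible histories, and a check at the single exit instant cannot rule out a tangential exit without uniqueness. For example, with $\Zspace=(-\infty,0]$ and $\mathcal F(\varphi)=\sqrt{|\varphi(0)|}$ the condition holds, yet $Z(t)=t^2/4$ leaves $\Zspace$. The paper's one-line sketch leaves this dependence implicit, and your comparison argument makes it explicit.

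Your treatment of the extension also covers the weaker neighbourhood version of the hypothesis in (A3). Your continuation step is sound: agreement with $Z$ forces $\widetilde Z_t\to Z_{\tilde t}\in\Xspace$, and the modified equation can be restarted from that history.
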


\begin{proof}
The assertion follows via the Nagumo viability theorem in the retarded setting. The velocity at time $t$ depends on the history $Z_t$, while the boundary condition is imposed at the present state
$Z(t)=Z_t(0)$. Since $\Zspace$ is closed and convex, the tangent-cone condition prevents a
first exit through any boundary face.  Applying this argument to the product cone described
in Lemma~\ref{lem:viability} gives positive invariance of $\Zspace$.
\end{proof}

\medskip

\begin{proposition}\label{prop:relaxation}
Let $K_a$ be compact convex and $T_a:\Xspace\to K_a$ be continuous and bounded.  If every
constrained component of the RFDE is either of the form
\begin{equation}\label{eq:relax}
  \dot x_a=\gamma_a(Z_t)\bigl(T_a(Z_t)-x_a\bigr),\qquad \gamma_a\ge0,
\end{equation}
or the projected form $\dot x_a=\gamma_a(Z_t)(\Pi_{K_a}\hat T_a(Z_t)-x_a)$, then the full
vector field satisfies condition~(A4).
\end{proposition}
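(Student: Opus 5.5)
The plan is to reduce condition~(A4) to a statement about each constrained component separately and then use the elementary fact that for a convex set $K$ and a point $x\in K$ the tangent cone contains every direction $y-x$ with $y\in K$. Concretely, for closed convex $K_a$ and $x_a\in K_a$,
\[
  T_{K_a}(x_a)=\overline{\bigcup_{\lambda>0}\lambda\,(K_a-x_a)},
\]
so $y-x_a\in T_{K_a}(x_a)$ for all $y\in K_a$, and the cone is closed under multiplication by nonnegative scalars. Since $\Zspace$ is a product of such sets, Lemma~\ref{lem:viability} says that $T_{\Zspace}(Z)$ is the product of the component cones. Unconstrained components have the whole space as their cone, so it is enough to check each constrained component on its own.

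The steps are as follows. Fix an admissible history $\varphi\in\Xspace$ and write $x_a=\varphi(0)_a\in K_a$.
\begin{enumerate}[label=(\arabic*)]
\item \emph{Relaxation form.} By assumption $T_a(\varphi)\in K_a$, so $T_a(\varphi)-x_a\in K_a-x_a\subset T_{K_a}(x_a)$. Multiplying by $\gamma_a(\varphi)\ge0$ keeps us in the cone, hence $\dot x_a\in T_{K_a}(x_a)$.
\item \emph{Projected form.} The metric projection $\Pi_{K_a}$ onto a nonempty closed convex set is well defined and takes values in $K_a$. So $\Pi_{K_a}\hat T_a(\varphi)\in K_a$ and the argument of step~(1) applies verbatim. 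Continuity of $\Pi_{K_a}$ (it is $1$-Lipschitz) also shows that this form does not spoil the Lipschitz hypotheses (A3), although that is not needed here.
\item \emph{Assembly.} Take the product over all components and invoke Lemma~\ref{lem:viability}. This gives $\mathcal F(\varphi)\in T_{\Zspace}(\varphi(0))$ for every $\varphi\in\Xspace$, which is exactly (A4) in the form stated in Remark~b).
\end{enumerate}

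As a sanity check I would verify the componentwise tests of Lemma~\ref{lem:viability} directly.
\begin{itemize}
\item For a ball, $\|x\|=R$ and $\|T\|\le R$ give $\langle x,T-x\rangle\le R^2-R^2=0$ by Cauchy--Schwarz.
\item For a box, at $q=a$ we have $T-a\ge0$, and symmetrically at $q=b$.
\item For a simplex, $\sum_i(T_i-w_i)=1-1=0$, and at $w_i=0$ we have $T_i-w_i=T_i\ge0$.
\end{itemize}

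The main obstacle is bookkeeping rather than depth. One point is that $\gamma_a$ and $T_a$ depend on the whole history while the cone is taken at the present state $\varphi(0)$. Since the membership $T_a(\varphi)\in K_a$ holds for every history, this causes no trouble. The other point is making sure every constrained coordinate of $\Zspace$ is actually covered by one of the two forms, since the product decomposition of Lemma~\ref{lem:viability} requires all components to pass simultaneously.
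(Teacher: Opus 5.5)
Your proposal is correct and follows essentially the same route as the paper. In both, $T_a(Z_t)\in K_a$ (or $\Pi_{K_a}\hat T_a(Z_t)\in K_a$) puts $T_a(Z_t)-x_a$ in the convex tangent cone $T_{K_a}(x_a)$, the factor $\gamma_a\ge0$ keeps it there, and the product structure of the tangent cone to $\Zspace$ assembles the components into (A4); your extra box and simplex checks only extend the paper's single ball verification.
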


\begin{proof}
For convex $K_a$, the cone $T_{K_a}(x_a)$ contains all directions $y-x_a$ with $y\in K_a$.
Each component derivative $\dot x_a=\gamma_a(T_a-x_a)$ lies in $T_{K_a}(x_a)$ because
$T_a\in K_a$; projecting onto $K_a$ preserves this.  Since the tangent cone to a product
of closed convex sets is the product of the component cones, the full derivative lies in
$T_{\Zspace}(Z)$.  In particular, for a ball: if $\|x_a\|=R$ and $\|T_a\|\le R$ then
$\langle x_a,T_a-x_a\rangle\le R^2-R^2=0$.
\end{proof}

\subsection{Formal closure}

We verify that the discretized system according to the specifications is well-defined and closes in the right manner.


The formal map treats $A^t\in\mathcal{A}$ as an exogenous input supplied by the environment
or by a separate sampling procedure. Given $({\tilde{Z}^t},u^t,A^t)$, the map
$\tilde{\mathcal{F}}$ is deterministic. The architecture contains action policy parameters
$\theta_{\rm act}$ and a policy $\pi_{\rm act}$ governing the sampling of $A^t$. The
analysis of these parameters depends on the externally supplied action. In that sense the agent is an open system, because perceptual inputs $u^t$ arrive from outside and actions $A^t$ exit to perturb the environment. A fully closed agent-environment loop lies outside the scope of the present
analysis.

The full state is
\[
Z = \bigl(\leftstate,\;\rightstate,\;Q_L,\;W_R,\;\mathcal{R}_\controller,\;\valuative,\;
\frontal,\;\memory,\;\{\rho_i\},\;\{z_i\},\;\{\theta_i\}\bigr)\in\Zspace.
\]

Three classes of auxiliary variable require explicit classification.
\emph{State-augmented variables:} the reliability variables $\rho_i$, eligibility traces
$z_i$, and policy parameters $\theta_i$ are dynamic state variables with their own update
rules and bounds and are included in $Z$ above.
\emph{Stagewise-derived scalars:} the homeostatic deviation $h$, prediction error
$\varepsilon_{\mathrm{pred}}$, novelty signal $n$, and outcome feedback $r$ are functions
of $(u^t,A^t,Z^t)$ computed at each step.
\emph{Delay scalars and history buffer:} the delay scalars
$\tau_{R\to L}$ and $\tau_{L\to R}$ are fixed architecture constants; the history buffer is
the finite discretized tail used by the implementation.
\emph{Controller broadcast:} the broadcast $B_\controller^t$ is computed from
$\mathcal{R}_\controller^t\in Z^t$ and subsystem export vectors at the opening of the
update step.
\emph{Memory gate:} the gate $g_\memory$ is computed as a deterministic Lipschitz function
of $Z^t$ at Stage~8.5.

The variable types are summarized in the following table.

\begin{center}
\small
\begin{tabular}{p{0.22\linewidth}p{0.44\linewidth}p{0.24\linewidth}}
\hline
Type & Examples & Treatment \\
\hline
Dynamic state & $\leftstate,\rightstate,Q_L,W_R,\mathcal{R}_\controller,
  \valuative,\frontal,\memory,\rho_i,z_i,\theta_i$ &
  Stored in $\tilde{Z}^t$; updated each step \\
Stagewise-derived & $h,\varepsilon_{\mathrm{pred}},n,r,B_\controller,g_\memory$ &
  Computed from $Z^t$ \\
External inputs & $u^t$ (sensory), $A^t$ (action) &
  Supplied exogenously \\
Architecture constants & $\tau_{R\to L},\tau_{L\to R},K,\varepsilon_Q,R_L,R_R,C_W$ &
  Fixed during a rollout \\
History buffer & $\{Z^{t-k}\}_{k=1}^{K}$ &
  Discretized tail of the history \\
Structural edge attributes & $g_{ij}\in\mathrm{SE}(d)$ &
  Fixed graph labels \\
\hline
\end{tabular}
\end{center}

The state domain $\mathcal{Z}$ is defined by the explicit constraints:
\[
\leftstate\in B_{R_L},\quad e_i\in B_{R_R},\quad Q_L\in[\varepsilon_Q,R_Q]^{T\times T},
\quad W_R\in\textstyle\prod_j\Delta^{k_j},\quad
\mathcal{R}_\controller\in\textstyle\prod_{i=1}^{n_s}\Delta^{n_s-1},
\]
\[
\valuative\in B_{R_Y},\quad \frontal\in B_{R_P},\quad \memory\in B_{R_M},\quad
\rho_i\in[0,1],\quad z_i\in B_{R_{z,i}},\quad \theta_i\in B_{R_{\theta,i}}.
\]
The constants $\varepsilon_Q>0$, $R_Q$, $R_L$, $R_R$, $R_Y$, $R_P$, $R_M$, $R_{z,i}$, and
$R_{\theta,i}$ are finite parameters of the architecture. The radius $R_M$ satisfies
$R_M\geq\max(\|\memory^0\|,C_\memory)$.

The continuous-time vector field assigns to every component of $Z$ a Lipschitz vector
field on this domain:
\[
\begin{array}{lll}
\leftstate & -\Delta_{\GL}(Q_L)\leftstate+F_L(Z,u)+\Kcal_\alpha(Z)\rightstate(t-\tauone) &
\text{principal field},\\
\rightstate & -\Delta_{\GR}(W_R)\rightstate+F_R(Z,u)+\Kcal_\beta^*(Z)\leftstate(t-\tautwo) &
\text{principal field},\\
Q_L & \Psi_{Q_L}(Z) & \text{compact positive box},\\
W_R & \Psi_{W_R}(Z) & \text{product of simplices},\\
\mathcal R_\controller & \Psi_{\mathcal R}(Z) & \text{routing simplex product},\\
\valuative & -\kappa_Y\valuative+G_Y(\leftstate,\rightstate,u) & \text{valuative field},\\
\frontal & \mathcal P(Z) & \text{executive field},\\
\memory & \Phi_\memory(Z) & \text{memory field},\\
\rho_i & \Psi_{\rho_i}(Z) & \text{reliability variable},\\
z_i & \Psi_{z_i}(Z) & \text{eligibility trace},\\
\theta_i & \Psi_{\theta_i}(Z) & \text{projected smooth policy drift}.
\end{array}
\]
Each auxiliary vector field is globally Lipschitz on $\Zspace$ and satisfies the tangent
cone condition in (A4). The stagewise algorithm that follows below is the discretization
of the continuous-time equation.

Each theorem below appeals to the conditions of
Specifications~\ref{bp:symbolic}--\ref{bp:memory} and the admissibility class $\mathfrak{C}_P$.

\textit{Remark (Discrete-time implementation).}
The following stages define an explicit Euler or operator-splitting implementation of the continuous-time RFDE~\eqref{eq:RFDE}.
Define the augmented state $\tilde{Z}^t:=(Z^t,Z^{t-1},\ldots,Z^{t-K})$ and the shift map
$\tilde{\mathcal{F}}(\tilde{Z}^t,u^t,A^t):=(Z^{t+1},Z^t,\ldots,Z^{t-K+1})$. Under (A5),
$Z^{t+1}$ is uniquely and acyclically determined; $\tilde{\mathcal{F}}$ is a well-defined
deterministic map on $\tilde{\mathcal{Z}}:=\mathcal{Z}^{K+1}$.

The update evaluates the components of $Z^{t+1}$ in eight stages. At each stage the
inputs are either elements of $Z^t$, elements of $(u^t,A^t)$, or outputs of earlier stages
within the same step.

\textbf{Stage 1 (Neuromodulatory signals).} Compute $\mu=\mathcal{V}_\mu(\valuative^t)\in(0,1)^5$
from $\valuative^t$ using the neuromodulatory readout component of $\mathcal{V}$.
Well-definedness follows from Specification~\ref{bp:valuative}(ii)-(iii). The single input
$\valuative^t$ is an element of $Z^t$.

\textbf{Stage 2 (Precision and awareness fields).} The base logit field $a_L\in\mathbb{R}^{T\times T}$
and the modulation weight $b_L\in\mathbb{R}^{T\times T}$ (with $b_L\geq 0$) are fixed
architecture constants. Define
$q_L^t:=a_L+\mu_{\mathrm{ACh}}^t\,b_L$ as the modulated logit field at step $t$ which furnishes a deterministic function of the architecture constants and $\mu_{\mathrm{ACh}}^t$ from Stage~1.
Compute
$Q_L^{t+1}=\varepsilon_Q+(R_Q-\varepsilon_Q)\,\sigma(q_L^t)$
from Stage~1 and Specification~\ref{bp:symbolic}(v)-(viii). Similarly, let $\omega^t:=\omega(Z^t)$ be the awareness logit field which is a deterministic function of $Z^t$. Compute
$W_R^{t+1}$ from $\omega^t$ and $\mu_{\mathrm{NE}}^t$ (Stage~1) using
Specification~\ref{bp:geometric}(v)-(vii). Both are well-defined by Lipschitz continuity of
the respective maps.

\textbf{Stage 3 (Interconnector signals).} Let $n=\lfloor\tau/\Delta t\rfloor\geq 1$ be the
integer delay index from the delay discretization and define
$\operatorname{Delay}_\tau(\rightstate^t,\ldots,\rightstate^{t-n}):=\rightstate^{t-n}$
(or the weighted combination). Compute
$C_{R\to L}=g_{R\to L}^t\,\Phi_{R\to L}(\operatorname{Delay}_\tau(\rightstate^t,\ldots,\rightstate^{t-n}))$
from the history buffer and $\mathcal{R}_\controller^t$, and
$C_{L\to R}=g_{L\to R}^t\,\Phi_{L\to R}(\operatorname{Delay}_\tau(\leftstate^t,\ldots,\leftstate^{t-n}))$
from the history buffer and $\mathcal{R}_\controller^t$. Gate values are entries of
$\mathcal{R}_\controller^t\in Z^t$. Boundedness follows from Specification~\ref{bp:interconnector}(i)-(iii).

\textbf{Stage 4 (Symbolic state).} Compute $\leftstate^{t+1}=\mathcal{T}_L(\leftstate^t,
B_\controller^t,C_{R\to L},Q_L^{t+1},\frontal^t,\memory^t)$ using Stages~2-3 and $Z^t$.
Well-definedness follows from Specification~\ref{bp:symbolic}(i).

\textbf{Stage 5 (Geometric state).} Compute $\rightstate^{t+1}=\mathcal{G}_R(\rightstate^t,
B_\controller^t,C_{L\to R},W_R^{t+1},\valuative^t,\memory^t)$ using Stages~2-3 and $Z^t$.
Well-definedness follows from Specification~\ref{bp:geometric}(i).

\textbf{Stage 6 (Valuative state).} Compute
$\valuative^{t+1}=\mathcal{V}_Y(\valuative^t,
h^t,\varepsilon_{\mathrm{pred}}^t,n^t,r^t,\leftstate^{t+1},\rightstate^{t+1},\memory^t,\frontal^t)$
using the outputs of Stages~4-5, where $h^t,\varepsilon_{\mathrm{pred}}^t,n^t,r^t$ are
stagewise-derived scalars computed from $(u^t,A^t,Z^t)$.
Well-definedness follows from Specification~\ref{bp:valuative}(iii)-(iv).

\textbf{Stage 7 (Routing matrix).} Compute
$\mathcal{R}_\controller^{t+1}=K(\leftstate^{t+1},\rightstate^{t+1},\valuative^{t+1},\rho^t,Z^t_{\mathrm{rest}})$
where $Z^t_{\mathrm{rest}}$ denotes the remaining components of $Z^t$ not yet updated.
The domain of $K$ at this stage is the partial-update product space
$B_{R_L}\times\prod_i B_{R_R}\times B_{R_Y}\times[0,1]^{n_s}\times\mathcal{Z}_{\mathrm{rest}}$; this is consistent with Specification~\ref{bp:controller}(iii) provided
that specification is interpreted as a condition on the routing function's inputs at Stage~7.
Reliability scores $\rho^t\in Z^t$ require no further computation at this stage.
Well-definedness follows from Specification~\ref{bp:controller}(iii).

\textbf{Stage 8 (Reliability, parameters, executive, memory).}
The sub-order within Stage~8 is fixed as follows; each sub-step relies solely on quantities already
available:
\begin{enumerate}[label=8.\arabic*,leftmargin=*]
\item Compute $\delta$ from $\valuative^{t+1}$ (Stage~6) via Specification~\ref{bp:policy}(iii);
  $\delta\in[-D,D]$.
\item Compute $\rho_i^{t+1}=(1-\alpha)\rho_i^t+\alpha f(\varepsilon_i^t)$ using
  Specification~\ref{bp:reliability}(i)-(iii); well-defined since $\rho_i^t\in Z^t$ and
  $\varepsilon_i^t$ is a stagewise-derived scalar available at the start of the step.
\item Compute $z_i^{t+1}=\lambda_i z_i^t+\nabla_{\theta_i}\log\pi_i^\varepsilon$ and
  $\Delta\theta_i^t=\eta_i(\delta z_i^t-\lambda_{\mathrm{reg}}\theta_i^t)$;
  then project $\theta_i^{t+1}\leftarrow\Pi_{B_{R_{\theta,i}}}(\theta_i^t+\Delta\theta_i^t)$.
  Uses $z_i^t$ (as opposed to the just-computed $z_i^{t+1}$); one-step-delayed REINFORCE convention.
\item Compute $\frontal^{t+1}=\mathcal{P}(\frontal^t,\{\Delta\theta_i^t\},\valuative^{t+1})$
  using Specification~\ref{bp:executive}(i)-(iii). Uses $\Delta\theta_i^t$ from sub-step 8.3 and
  $\valuative^{t+1}$ from Stage~6; does not use $\memory^{t+1}$.
\item Compute $\memory^{t+1}=(1-g_\memory)\memory^t+g_\memory\Phi_\memory(\leftstate^{t+1},
  \valuative^{t+1})$ using Specification~\ref{bp:memory}(i)-(iii). Uses $\leftstate^{t+1}$
  (Stage~4) and $\valuative^{t+1}$ (Stage~6); does not use $\frontal^{t+1}$ or $z_i^{t+1}$.
\end{enumerate}

At each stage and sub-stage the required inputs are in $\tilde{Z}^t$, $(u^t,A^t)$, or outputs of earlier stages.
No component of $Z^{t+1}$ appears as an input to its own computation. The within-step
dependency graph is acyclic.

\textit{Remark (Covariant closure).}
The RFDE establishes that $Z(t)$ evolves continuously from $Z_0\in\Xspace$. A stronger
condition, covariant closure, would require the dynamics to be consistent under the symmetries
of the physical environment. Specifically, if $\phi\in\mathrm{Aut}(G)\times\mathrm{SE}(d)$
is applied to the physical scene, the field trajectories of the transformed and untransformed
systems should be related by $\phi$ throughout. Since the rigid-frame transforms $g_{ij}$ are
fixed structural labels and the node embeddings $e_i$ are scalar ($\mathrm{SE}(d)$-invariant)
features, the relevant symmetry property is permutation-equivariance under $\mathrm{Aut}(G)$
combined with $\mathrm{SE}(d)$-invariance in the feature sector. A stronger condition, full covariant closure, would additionally require the interconnector to intertwine the symmetry action.
By Specification~\ref{bp:geometric}(ii), the geometric update
$\mathcal{G}_R$ satisfies: $\mathcal{G}_R(\sigma\cdot\{e_i\},\{g_{\sigma(i)\sigma(j)}\})=\sigma\cdot\mathcal{G}_R(\{e_i\},\{g_{ij}\})$
for any $\sigma\in\mathrm{Aut}(G)$, and the outputs are invariant to $\mathrm{SE}(d)$
relabelings of $g_{ij}$, cf. \citet{satorras2021egnn}.
Covariant closure of the full system would additionally require that the interconnector
intertwines this action with a corresponding representation $\rho_L$ of
$\mathrm{Aut}(G)\times\mathrm{SE}(d)$ on the symbolic state space, i.e.
$\Phi_{R\to L}(\phi\cdot\{e_i\})=\rho_L(\phi)\cdot\Phi_{R\to L}(\{e_i\})$, and that the
symbolic update operator $\mathcal{T}_L$ commutes with $\rho_L$. If this requirement is not fulfilled, the geometric module is equivariant in isolation but the equivariance is not transmitted through the interconnector, and the agent's symbolic dynamics may represent different tokens for physically equivalent scenes in different frames. Whether a consolidated architecture satisfying all of these conditions simultaneously exists is an open question. The conditions above identify what full covariant closure requires and are left for further development.

\textit{Standing joint-satisfiability assumptions (B5).}
We record the following cross-specification requirements.
\begin{enumerate}[label=(J\arabic*),leftmargin=*]
\item \emph{NE-modulation vs.\ sparsity.} $\mu_{\mathrm{NE}}$ modulation of the awareness
  logit must not push all in-degree edges into the active support, or the sparsity bound
  of Specification~\ref{bp:geometric}(vii) is violated.
\item \emph{Layer-normalization variance.} The Lipschitz constant bound for layer normalization
  in the Specification~\ref{bp:geometric} existence argument requires a positive lower bound on
  the pre-normalization variance that is not stated as a specification condition.
\item \emph{Interconnector gate gradient.} The gate values $g_{R\to L},g_{L\to R}\in[0,1]$
  must have non-vanishing gradients for training; no smooth function is simultaneously
  bounded and gradient-nonvanishing everywhere.
\item \emph{SE$(d)$-invariant routing logits.} The salience logits of Specification~\ref{bp:controller}
  must depend on $Z$ only through $\mathrm{SE}(d)$-invariant functions of $\rightstate$ for
  routing to be consistent with the symmetry condition of Specification~\ref{bp:geometric}(ii).
\item \emph{Existence arguments non-conflicting.} The individual specification existence arguments
  must be simultaneously satisfiable; this is the joint non-emptiness assumption (A5).
\end{enumerate}
The coupling kernel formulation of Specification~\ref{bp:interconnector} provides a natural
framework in which several of these conditions can be recast at the level of the interaction
vertex; this is developed further in the final section.

The following table summarizes the dependency structure.

\begin{center}
\begin{tabular}{lll}
\hline
Component & Principal inputs & Specification \\
\hline
$\mu^{t+1}$ & $\valuative^t$ & \ref{bp:valuative} \\
$Q_L^{t+1}$ & $\mu_{\mathrm{ACh}}$ (Stage~1) & \ref{bp:symbolic} \\
$W_R^{t+1}$ & $\rightstate^t,\;\mu_{\mathrm{NE}}$ (Stage~1) & \ref{bp:geometric} \\
$C_{R\to L},C_{L\to R}$ & $\rightstate^{t-\tau},\;\leftstate^{t-\tau},\;
  \mathcal{R}_\controller^t$ & \ref{bp:interconnector} \\
$\leftstate^{t+1}$ & $Q_L^{t+1},\;C_{R\to L},\;\text{Stages 1-3}$ & \ref{bp:symbolic} \\
$\rightstate^{t+1}$ & $W_R^{t+1},\;C_{L\to R},\;\text{Stages 1-3}$ & \ref{bp:geometric} \\
$\valuative^{t+1}$ & $\leftstate^{t+1},\;\rightstate^{t+1},\;h^t,\varepsilon_{\mathrm{pred}}^t,n^t,r^t$ &
  \ref{bp:valuative} \\
$\mathcal{R}_\controller^{t+1}$ & $\leftstate^{t+1},\;\rightstate^{t+1},\;\valuative^{t+1},\;
  \rho^t$ & \ref{bp:controller} \\
$\rho_i^{t+1}$ & $\varepsilon_i^t$ (prediction errors of subsystems) & \ref{bp:reliability} \\
$\frontal^{t+1}$ & $\mu_{\mathrm{DA}},\;z_i^t,\;\theta_i^t,\;\text{Stages 4-7}$ & admissibility of $\mathcal{P}$ \\
$z_i^{t+1},\;\theta_i^{t+1}$ & $z_i^t,\;\theta_i^t,\;\delta^t,\;\pi_i^\varepsilon$ & \ref{bp:policy} \\
$\memory^{t+1}$ & $\leftstate^{t+1},\;\valuative^{t+1}$ &
  \ref{bp:memory} \\
\hline
\end{tabular}
\end{center}



\subsection{Well-posedness and stability}

\begin{theorem}\label{thm:wellposed}
Assume (A1) through (A5). For constant input $u\equiv u^*$ and every initial history
$\varphi\in\Xspace=C([-\taumax,0],\Zspace)$, the RFDE~\eqref{eq:RFDE} has a unique forward
solution $Z\in C([-\taumax,\infty),\Zspace)$. The history-segment map
$\varphi\mapsto Z_t$ defines a continuous semiflow $\{T(t)\}_{t\geq0}$ on $\Xspace$. For
continuous or piecewise continuous time-dependent input it defines a continuous solution
process.
\end{theorem}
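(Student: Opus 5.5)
The plan is to follow the standard route for retarded functional differential equations (Hale--Verduyn Lunel, Chapter~2). Local existence and uniqueness will come from a Picard argument in the ambient Banach history space. Forward invariance of $\Zspace$ will come from Lemma~\ref{lem:rfde_viability}. Compactness of $\Zspace$ will then rule out blow-up, giving global existence. Continuity of the semiflow will follow from a Gronwall estimate.

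\emph{Step 1 (local solutions).} Fix $u\equiv u^*$ and write the RFDE~\eqref{eq:RFDE} as $\dot Z(t)=\mathcal F(Z_t)$. By (A3), $\mathcal F$ is globally Lipschitz on $\Xspace$ and has a locally Lipschitz extension $\widetilde{\mathcal F}$ to an open neighbourhood $\mathcal O$ of $\Xspace$ in $\Xspace_{\mathcal E}$. I may need to compose with a Lipschitz retraction, or use the componentwise Whitney--McShane extension, so that $\widetilde{\mathcal F}$ is globally Lipschitz on all of $\Xspace_{\mathcal E}$ with some constant $L$. For $\varphi\in\Xspace$, I will consider the integral equation
\[
Z(t)=\varphi(0)+\int_0^t \widetilde{\mathcal F}(Z_s)\,ds \quad (t\ge0), \qquad Z_0=\varphi .
\]
The map is a contraction on $C([-\taumax,h],\mathcal E)$ with fixed initial segment once $hL<1$, since $\|Z_s-Z'_s\|_\infty\le\sup_{[0,h]}|Z-Z'|$. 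Because $h$ is independent of $\varphi$, iterating gives a unique solution on $[-\taumax,\infty)$ of the extended equation. Uniqueness among solutions of the original equation follows because they coincide wherever they stay in $\Xspace$.

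\emph{Step 2 (viability, the main obstacle).} I must show that this solution stays in $\Zspace$. Then $Z_t\in\Xspace$, $\widetilde{\mathcal F}(Z_t)=\mathcal F(Z_t)$, and $Z$ solves the true RFDE. This is exactly Lemma~\ref{lem:rfde_viability}, which uses the tangent-cone condition (A4), the closedness and convexity of $\Zspace$ from (A2), and the product-cone decomposition of Lemma~\ref{lem:viability}.

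The delicate point is that the Nagumo argument is usually stated for ODEs. Here the velocity depends on the whole history, so the condition $\mathcal F(Z_t)\in T_{\Zspace}(Z(t))$ only controls the present state. My first attempt is the distance-function argument. Let $d(t)=\operatorname{dist}(Z(t),\Zspace)$, and set $\Pi=\Pi_{\Zspace}$. For each $t$, compare the extended history $Z_t$ with the projected history $\Pi\circ Z_t\in\Xspace$. Lipschitz continuity gives
\[
\operatorname{dist}\bigl(\widetilde{\mathcal F}(Z_t),\,T_{\Zspace}(\Pi Z(t))\bigr)\le L\sup_{s\le t}d(s).
\]
Convexity then yields the Dini estimate $D^+d(t)\le L\sup_{[t-\taumax,t]}d$. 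Since $d\equiv0$ on $[-\taumax,0]$, a Halanay/Gronwall argument forces $d\equiv0$. If the tangent cone at the projected point is not the right one to use, I would fall back on the penalized approximation $\dot Z=\widetilde{\mathcal F}(Z_t)-k(Z-\Pi Z)$ and let $k\to\infty$.

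\emph{Step 3 (global existence, semiflow, inputs).} Global existence is already built into Step~1, and in any case boundedness of $\Zspace$ excludes blow-up. The semigroup property $T(t+s)=T(t)T(s)$ follows from uniqueness. For two initial histories, the integral equation gives
\[
|Z(t)-Z'(t)|\le\|\varphi-\varphi'\|+L\int_0^t\|Z_s-Z'_s\|\,ds ,
\]
so $\|Z_t-Z'_t\|\le e^{Lt}\|\varphi-\varphi'\|$. Joint continuity of $(t,\varphi)\mapsto Z_t$ then follows from uniform continuity of $Z$ on compact time intervals.

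For piecewise continuous $u(t)$, I would repeat the argument with a Carathéodory right-hand side, assuming Lipschitz constants uniform in $u$ over the compact input range. The result is a two-parameter solution process $U(t,s)$ in place of a semigroup.
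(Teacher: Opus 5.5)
Your proposal is correct and follows the paper's route: Picard iteration in the ambient space $\Xspace_{\mathcal E}$ using the extension from (A3), invariance of $\Zspace$ via Lemma~\ref{lem:rfde_viability} and (A4), compactness to exclude escape, and uniqueness plus the shift structure for the semiflow. Your globally Lipschitz extension $\psi\mapsto\mathcal F(\Pi_{\Zspace}\circ\psi)$, your distance-function proof of viability, and your Gronwall bound $\|Z_t-Z'_t\|\le e^{Lt}\|\varphi-\varphi'\|$ only make explicit steps that the paper cites or asserts without proof.
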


\begin{proof}
The Picard theorem is applied on the ambient Banach history space
$\Xspace_{\mathcal E}=C([-\taumax,0],\mathcal E)$.  By (A3), the RFDE vector field
$\mathcal F:\Xspace\to\mathcal E$ is globally Lipschitz on admissible histories and has
a locally Lipschitz extension to an open neighbourhood in $\Xspace_{\mathcal E}$.  For the
delayed coupling, for example,
\[
\begin{aligned}
&\|\Kcal_\alpha(\varphi(0))\varphi_R(-\tauone)
      -\Kcal_\alpha(\psi(0))\psi_R(-\tauone)\|\\
&\qquad\leq C_\Kcal\|\varphi_R(-\tauone)-\psi_R(-\tauone)\|
   +L_\alpha R_R\|\varphi(0)-\psi(0)\|,
\end{aligned}
\]
and the reverse delayed term is identical with $L_\beta$ and $R_L$.  The standard RFDE
existence theorem therefore gives a unique local solution in the ambient space.  By the
RFDE viability lemma and condition~(A4), the solution remains in $\Zspace$ as long as it
exists.  Compactness of $\Zspace$ by (A2) excludes finite escape, so the solution is global.
The semiflow property for constant input follows from uniqueness and the shift structure on
history segments.
\end{proof}

\begin{theorem}\label{thm:attractor}
Assume (A1) through (A5) and constant input. The semiflow $\{T(t)\}_{t\geq0}$ has a compact
global attractor $\mathcal A\subset\Xspace$. 
\end{theorem}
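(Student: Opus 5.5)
The plan is to use the standard criterion for compact global attractors of semiflows on complete metric spaces (Hale, \emph{Asymptotic Behavior of Dissipative Systems}). A continuous semiflow that is point dissipative, has bounded orbits of bounded sets, and is asymptotically smooth (or eventually compact) has a compact global attractor. Here the phase space $\Xspace=C([-\taumax,0],\Zspace)$ is itself bounded, because $\Zspace$ is compact by (A2). Hence the dissipativity and boundedness requirements are automatic, and $\Xspace$ itself is a bounded absorbing set. By Theorem~\ref{thm:wellposed}, $\{T(t)\}$ is a continuous semiflow on $\Xspace$, and $\Xspace$ is forward invariant by Lemma~\ref{lem:rfde_viability} together with (A4). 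The remaining work is compactness.

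\textbf{Step 1 (uniform Lipschitz bound on solutions).} By (A3), $\mathcal F$ is globally Lipschitz, hence continuous, on $\Xspace$. For any $\varphi\in\Xspace$, both $\mathcal F(\varphi)$ and $\mathcal F(0\text{-history})$ lie in $\mathcal E$. Combined with compactness of $\Zspace$ (so $\|\varphi\|_\infty\le \operatorname{diam}$-type bound), this gives $\sup_{\varphi\in\Xspace}\|\mathcal F(\varphi)\|\le M<\infty$. Therefore every solution satisfies $\|\dot Z(t)\|\le M$ for $t>0$, so $Z$ is $M$-Lipschitz on $[0,\infty)$.

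\textbf{Step 2 (eventual compactness).} For $t\ge\taumax$, the segment $Z_t=T(t)\varphi$ is a function on $[-\taumax,0]$ with values in the compact set $\Zspace$ and Lipschitz constant $M$. The set $\mathcal K_M:=\{\psi\in\Xspace:\ \psi \text{ is } M\text{-Lipschitz}\}$ is equicontinuous and pointwise relatively compact. By Arzel\`a--Ascoli it is therefore compact in $\Xspace$, and it is closed because the Lipschitz property passes to uniform limits. We then have $T(t)\Xspace\subset\mathcal K_M$ for all $t\ge\taumax$, so the semiflow is compact for $t\ge\taumax$. It follows that $\mathcal K_M$ is a compact absorbing set which absorbs all of $\Xspace$ uniformly after time $\taumax$. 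This finite-dimensionality plus retardation argument is the crux. It is also the step I expect to need the most care, because one must check that the bound $M$ comes from the global Lipschitz extension in (A3) and not merely from local constants.

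\textbf{Step 3 (construction of $\mathcal A$).} Define $\mathcal A:=\omega(\mathcal K_M)=\bigcap_{s\ge0}\overline{\bigcup_{t\ge s}T(t)\mathcal K_M}$. The standard theorem for semiflows with a compact absorbing set then applies (Hale, Thm.~2.4.6, or Temam). It shows that $\mathcal A$ is nonempty, compact, and invariant, with $T(t)\mathcal A=\mathcal A$, and that it attracts every bounded subset of $\Xspace$, in particular $\Xspace$ itself. The only inputs are continuity of $T(t)$ for each fixed $t$ and joint continuity on compacts, both supplied by Theorem~\ref{thm:wellposed}, together with the compact absorbing set from Step 2. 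Invariance uses continuity of $T(t)$ on the compact set $\mathcal K_M$, and attraction follows by the usual contradiction argument with subsequences extracted in $\mathcal K_M$. This gives the compact global attractor $\mathcal A\subset\Xspace$.
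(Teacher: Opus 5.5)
Your proposal is correct and follows essentially the same route as the paper: a uniform bound on the vector field over $\Xspace$, uniformly Lipschitz segments for $t\ge\taumax$, Arzel\`a--Ascoli, and then Hale's theorem (equivalently, the $\omega$-limit set of the compact absorbing set $\mathcal K_M$). The only variation is that you obtain the bound from the global Lipschitz constant $L$ in (A3) on the bounded set $\Xspace$, whereas the paper uses the finite-point-evaluation structure of the right-hand side and compactness of $\Zspace$. One small repair: compare against a fixed admissible history $\varphi_0\in\Xspace$ rather than the zero history, which need not lie in $\Xspace$ (e.g.\ $Q_L\geq\varepsilon_Q>0$) and where (A3) gives no global Lipschitz control; this yields $\|\mathcal F(\varphi)\|\le\|\mathcal F(\varphi_0)\|+L\,\mathrm{diam}(\Zspace)$.
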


\begin{proof}
The domain $\Zspace$ is compact and positively invariant, hence all solution values remain
uniformly bounded. We note that compactness of $\Zspace$ alone does not imply compactness of the history space. The required compactness is obtained by a uniform boundedness argument. The right-hand side depends only on finitely many evaluations of the history, and all component maps are continuous and bounded on the compact state domain $\Zspace$. Therefore the RFDE vector field is uniformly bounded on the admissible history space, and solution segments $T(t)\varphi$ with $t>\taumax$ are uniformly Lipschitz on $[-\taumax,0]$. Arzela-Ascoli gives precompactness of $T(t)B$ for every bounded set $B\subset\Xspace$ and every $t>\taumax$. Hale's attractor theorem for eventually compact dissipative semiflows then gives a compact global attractor.
\end{proof}

\begin{theorem}\label{thm:stability}
Assume (A1) through (A7), constant input, fixed auxiliary variables
\[
Q_L,W_R,Y,M,\mathcal R_\controller,\rho_i,z_i,\theta_i
\]
at their principal-regime equilibrium values, fixed interfield operators $\Kcal$ and
$\Kcal^*$, and the small-gain condition~\eqref{eq:SC}. Then the principal equilibrium
$(H_L^*,X_R^*,P^*)$ is unique and globally asymptotically stable for all finite delays
$\tauone,\tautwo\geq0$. The Lyapunov-Krasovskii functional
\[
  V(\tilde\varphi)=\tfrac{1}{2}\|\tilde\varphi_L(0)\|^2
  +\tfrac{1}{2}\|\tilde\varphi_R(0)\|^2
  +\tfrac{1}{2}\|\tilde\varphi_P(0)\|^2
  +\tfrac{C_\Kcal^2}{2\mu_L}\!\int_{-\tauone}^{0}\|\tilde\varphi_R(s)\|^2\,ds
  +\tfrac{C_\Kcal^2}{2\mu_R}\!\int_{-\tautwo}^{0}\|\tilde\varphi_L(s)\|^2\,ds
\]
satisfies along solutions
\[
  \dot V\leq
  -\alpha_L\|\tilde\leftstate(t)\|^2
  -\alpha_R\|\tilde\rightstate(t)\|^2
  -\mu_P\|\tilde\frontal(t)\|^2,
\]
where $\alpha_L=\mu_L/2-C_\Kcal^2/(2\mu_R)>0$ and
$\alpha_R=\mu_R/2-C_\Kcal^2/(2\mu_L)>0$.
\end{theorem}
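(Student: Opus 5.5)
We work in error coordinates around an equilibrium $(H_L^*,X_R^*,P^*)$ of the frozen principal system, which exists by (A6). Set $\tilde\leftstate=\leftstate-H_L^*$, $\tilde\rightstate=\rightstate-X_R^*$ and $\tilde\frontal=\frontal-P^*$. Since the auxiliary variables and the operators $\Kcal,\Kcal^*$ are frozen, the error dynamics take the form
\[
\dot{\tilde\leftstate}=f_L(t)+\Kcal\tilde\rightstate(t-\tauone),\qquad
\dot{\tilde\rightstate}=f_R(t)+\Kcal^*\tilde\leftstate(t-\tautwo),\qquad
\dot{\tilde\frontal}=f_P(t).
\]
Here $f_L,f_R,f_P$ are the differences of the instantaneous vector fields evaluated at the current state and at the equilibrium. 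I read (A7) as the one-sided dissipativity bounds $\langle\tilde\leftstate,f_L\rangle\le-\mu_L\|\tilde\leftstate\|^2$, $\langle\tilde\rightstate,f_R\rangle\le-\mu_R\|\tilde\rightstate\|^2$ and $\langle\tilde\frontal,f_P\rangle\le-\mu_P\|\tilde\frontal\|^2$. This reading assumes the executive field does not feed back through $\leftstate,\rightstate$ in the frozen regime. If it does, a cross term appears and has to be absorbed into $\mu_P$; I will flag this as an interpretive assumption.

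Next I differentiate $V$ along solutions. The three point terms give $-\mu_L\|\tilde\leftstate\|^2+\langle\tilde\leftstate(t),\Kcal\tilde\rightstate(t-\tauone)\rangle$, the analogous $R$ expression, and $-\mu_P\|\tilde\frontal\|^2$. By Leibniz's rule the two integral terms contribute
\[
\tfrac{C_\Kcal^2}{2\mu_L}\bigl(\|\tilde\rightstate(t)\|^2-\|\tilde\rightstate(t-\tauone)\|^2\bigr)
+\tfrac{C_\Kcal^2}{2\mu_R}\bigl(\|\tilde\leftstate(t)\|^2-\|\tilde\leftstate(t-\tautwo)\|^2\bigr).
\]
The key step is a weighted Young inequality on each cross term:
\[
\langle\tilde\leftstate(t),\Kcal\tilde\rightstate(t-\tauone)\rangle\le\tfrac{\mu_L}{2}\|\tilde\leftstate(t)\|^2+\tfrac{C_\Kcal^2}{2\mu_L}\|\tilde\rightstate(t-\tauone)\|^2,
\]
and symmetrically for the other term with $\mu_R$. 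The delayed squares then cancel exactly against the negative Leibniz terms, which is why the integral weights were chosen as they are. Collecting the remaining terms gives $-(\mu_L/2-C_\Kcal^2/(2\mu_R))\|\tilde\leftstate\|^2$ and the analogous $R$ term, i.e. exactly $\alpha_L,\alpha_R$. The only delay dependence was in the Leibniz terms and it cancelled, so the bound holds for all $\tauone,\tautwo\ge0$.

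The main obstacle is the sign of $\alpha_L,\alpha_R$. The condition $\alpha_L>0$ means $C_\Kcal^2<\mu_L\mu_R$, which is exactly \eqref{eq:SC}, and the same is true for $\alpha_R$. So the small-gain condition is precisely what is needed, and I will state this explicitly.

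To conclude, $V$ is bounded below by $\frac12\|\tilde\varphi(0)\|^2$ and bounded above by a constant times $\|\tilde\varphi\|_\infty^2$, with the constant depending on $\taumax$. Together with $\dot V\le-\alpha\|\tilde Z(t)\|^2$, where $\alpha=\min(\alpha_L,\alpha_R,\mu_P)$, the Lyapunov--Krasovskii theorem (Hale, Ch.~5) gives uniform stability. Integrability of $\|\tilde Z\|^2$ follows by integrating $\dot V$. Solutions stay in compact $\Zspace$ by Lemma~\ref{lem:rfde_viability}, so $\dot{\tilde Z}$ is bounded (Theorem~\ref{thm:wellposed}), and Barbalat's lemma then yields $\tilde Z(t)\to0$. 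This proves global asymptotic stability on the admissible phase space. Uniqueness follows by applying the estimate to a second equilibrium $\hat Z$ as a constant solution: $V$ is then constant in time while $\dot V\le-\alpha\|\hat Z-Z^*\|^2$, which forces $\hat Z=Z^*$.
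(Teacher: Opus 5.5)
Your proposal is correct and follows the paper's proof: the same Lyapunov--Krasovskii functional, the same weighted Young inequalities whose delayed squares cancel against the Leibniz terms of the integral weights, and Barbalat's lemma for convergence. Your explicit two-sided bound on $V$ also supplies the Lyapunov-stability half of global asymptotic stability, which the paper leaves implicit. For uniqueness, the paper tests the difference of two equilibrium equations directly against the positive definite form $\mu_L a^2+\mu_R b^2-2C_\Kcal ab$, whereas you apply the derivative estimate along a constant solution; this is the same computation in different packaging.
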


\begin{proof}
The closed principal regime uses fixed auxiliary variables and fixed interfield operators
$\Kcal$ and $\Kcal^*$.  After centering, the principal equations contain only the delayed
coupling terms $\Kcal\tilde X_R(t-\tauone)$ and
$\Kcal^*\tilde H_L(t-\tautwo)$. The state-dependent Laplacian variations and auxiliary-field
variations are absent by the definition of the reduced regime.

We first prove uniqueness of the principal equilibrium.  Let
\[
  h=H_1-H_2,\qquad x=X_1-X_2,\qquad p=P_1-P_2
\]
be the difference of two principal equilibria.  Testing the difference of the equilibrium
equations against $(h,x,p)$ gives
\[
\begin{aligned}
0
&\leq
-\mu_L\|h\|^2-\mu_R\|x\|^2-\mu_P\|p\|^2
 + \langle \Kcal x,h\rangle+\langle \Kcal^*h,x\rangle  \\
&\leq
-\mu_L\|h\|^2-\mu_R\|x\|^2-\mu_P\|p\|^2
 +2C_\Kcal\|h\|\,\|x\|.
\end{aligned}
\]
The quadratic form
\[
  \mu_L a^2+\mu_R b^2-2C_\Kcal ab
\]
is positive definite because $C_\Kcal^2<\mu_L\mu_R$.  Hence $h=x=p=0$, and the principal
equilibrium is unique.

The derivative of $V$ is computed term by term. The dissipative contributions are bounded by
$-\mu_L\|\tilde H_L\|^2$, $-\mu_R\|\tilde X_R\|^2$, and
$-\mu_P\|\tilde P\|^2$. Young's inequality gives
\[
\langle \Kcal\tilde X_R(t-\tauone),\tilde H_L(t)\rangle
\leq \frac{C_\Kcal^2}{2\mu_L}\|\tilde X_R(t-\tauone)\|^2
     +\frac{\mu_L}{2}\|\tilde H_L(t)\|^2,
\]
and
\[
\langle \Kcal^*\tilde H_L(t-\tautwo),\tilde X_R(t)\rangle
\leq \frac{C_\Kcal^2}{2\mu_R}\|\tilde H_L(t-\tautwo)\|^2
     +\frac{\mu_R}{2}\|\tilde X_R(t)\|^2 .
\]
Differentiating the two integral terms cancels the delayed norms exactly. Collecting the
remaining current terms yields the displayed inequality. Since $\alpha_L,\alpha_R,\mu_P$
are positive, the current principal errors are square integrable on $[0,\infty)$. The RFDE
vector field is bounded on the compact invariant set, so these errors are uniformly
continuous. Barbalat's lemma gives convergence of $(H_L(t),X_R(t),P(t))$ to the principal
equilibrium.
\end{proof}

\textit{Remark.}
\emph{i)} Under the hypotheses of Theorem~\ref{thm:stability}, allow
$\Kcal_\alpha(Z)$, $\Kcal_\beta^*(Z)$, $\Delta_{G_L}(Q_L)$, and
$\Delta_{G_R}(W_R)$ to depend Lipschitz-continuously on $Z$, with Lipschitz constants
$L_\alpha$, $L_\beta$, $L_Q$, and $L_W$, respectively.  Centering at an equilibrium $Z^*$
produces, in addition to the fixed-coupling delayed terms, the instantaneous perturbations
\begin{align*}
&  \bigl(\Kcal_\alpha(Z)-\Kcal_\alpha(Z^*)\bigr)X_R^*,\qquad
   \bigl(\Kcal_\beta^*(Z)-\Kcal_\beta^*(Z^*)\bigr)H_L^*,\\
&  \bigl(\Delta_{G_L}(Q_L)-\Delta_{G_L}(Q_L^*)\bigr)H_L^*,\qquad
   \bigl(\Delta_{G_R}(W_R)-\Delta_{G_R}(W_R^*)\bigr)X_R^* .
\end{align*}
Consequently, the centered $\tilde H_L$ equation receives an instantaneous perturbation
bounded by
\[
  m_L\|\tilde Z\|,
  \qquad
  m_L:=L_\alpha\|X_R^*\|+L_Q\|H_L^*\|,
\]
and the centered $\tilde X_R$ equation receives an instantaneous perturbation bounded by
\[
  m_R\|\tilde Z\|,
  \qquad
  m_R:=L_\beta\|H_L^*\|+L_W\|X_R^*\|.
\]
A global stability conclusion for the fully state-dependent adaptive system requires an
auxiliary Lyapunov or input-to-state stability (ISS) estimate for the remaining components of $\tilde Z$ and positivity of the resulting coupled dissipation matrix.

\emph{ii)} The underlying assumptions of Theorem~\ref{thm:stability} entail zero cross-gain. That means the auxiliary variables, including $\valuative$, policy deltas, routing, and memory, are frozen at equilibrium, so the centered executive block contributes only its intrinsic dissipativity
\[
  \frac{d}{dt}\frac12\|p\|^2
  \le -\mu_P\|p\|^2 .
\]
If active executive forcing is retained, write
\[
  \|R_P(h,x,y)\|
  \le
  c_{PH}\|h\|+c_{PX}\|x\|+c_{PY}\|y\|,
\]
where all $c_{\cdot}$ are nonnegative gain bounds. With the valuative relaxation
\[
  \dot{\valuative}
  =
  \kappa_Y\bigl(\Phi_Y(\leftstate,\rightstate,u)-\valuative\bigr),
\]
define
\[
  R_\Phi(h,x)
  :=
  \Phi_Y(\leftstate^*+h,\rightstate^*+x,u^*)
  -
  \Phi_Y(\leftstate^*,\rightstate^*,u^*),
\]
and assume
\[
  \|R_\Phi(h,x)\|
  \le
  L_{\Phi H}\|h\|+L_{\Phi X}\|x\|.
\]
Thus valuation may react to symbolic and geometric perturbations, but only with bounded
gain. Since the fast valuative coordinate tracks $y\approx R_\Phi(h,x)$, the
valuation-mediated gains propagated into the executive block are
\[
  c_{PY}L_{\Phi H},
  \qquad
  c_{PY}L_{\Phi X}.
\]
Consequently define
\[
  c_{PH}^{\mathrm{eff}}
  :=
  c_{PH}+c_{PY}L_{\Phi H},
  \qquad
  c_{PX}^{\mathrm{eff}}
  :=
  c_{PX}+c_{PY}L_{\Phi X}.
\]
After the delayed symbolic-geometric terms have been absorbed, the available margins are
\[
  \omega_L
  :=
  \frac{\mu_L}{2}-\frac{C_\Kcal^2}{2\mu_R},
  \qquad
  \omega_R
  :=
  \frac{\mu_R}{2}-\frac{C_\Kcal^2}{2\mu_L}.
\]
The executive cross-gain condition is then
\[
  M_P^{\mathrm{eff}}
  :=
  \begin{pmatrix}
    \omega_L & 0 & -\frac12 c_{PH}^{\mathrm{eff}}\\[2mm]
    0 & \omega_R & -\frac12 c_{PX}^{\mathrm{eff}}\\[2mm]
    -\frac12 c_{PH}^{\mathrm{eff}} & -\frac12 c_{PX}^{\mathrm{eff}} & \mu_P
  \end{pmatrix}
  >0 .
\]

For finite $\kappa_Y$, this condition is understood together with the usual
$O(\kappa_Y^{-1})$ tracking-error allowance for the fast valuative relaxation.

\begin{theorem}\label{thm:slowfast}
Assume constant input, bounded trajectories, $\kappa_Y>0$, and let
\[
  \phi(t):=\Phi_Y(H_L(t),X_R(t),u^*)
\]
along a complete bounded solution.  Suppose
\[
  \phi\in W^{1,\infty}_{\mathrm{loc}}([0,\infty)),
  \qquad
  \|\phi(t)\|\le R_Y,
  \qquad
  \|\dot\phi(t)\|\le G_1
  \quad\text{for a.e. }t\ge0 .
\]
Consider the relaxation equation
\[
  \dot Y(t)=\kappa_Y(\phi(t)-Y(t)).
\]
Then
\[
  \|Y(t)-\phi(t)\|
  \le
  e^{-\kappa_Y t}\|Y(0)-\phi(0)\|
  +
  \frac{G_1}{\kappa_Y}.
\]
Thus, after the exponentially decaying transient, the valuative coordinate lies within
$O(\kappa_Y^{-1})$ of the quasi-steady graph
\[
  Y=\Phi_Y(H_L,X_R,u^*).
\]
\end{theorem}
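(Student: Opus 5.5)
The plan is to treat the relaxation equation as a linear ODE for the tracking error $e(t):=Y(t)-\phi(t)$ and apply a variation-of-constants estimate. Since $\phi\in W^{1,\infty}_{\mathrm{loc}}$, it is locally absolutely continuous. The solution $Y$ of $\dot Y=\kappa_Y(\phi-Y)$ is $C^1$, because $\phi$ is continuous. Hence $e$ is locally absolutely continuous and satisfies, for a.e. $t\ge0$,
\[
  \dot e(t)=-\kappa_Y e(t)-\dot\phi(t).
\]

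The key step is to multiply by the integrating factor $e^{\kappa_Y t}$. The product $e^{\kappa_Y t}e(t)$ is again locally absolutely continuous, and its a.e. derivative is $-e^{\kappa_Y t}\dot\phi(t)$. Integrating from $0$ to $t$ then gives the Duhamel formula
\[
  e(t)=e^{-\kappa_Y t}e(0)-\int_0^t e^{-\kappa_Y(t-s)}\dot\phi(s)\,ds .
\]

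Next I take norms and use $\|\dot\phi(s)\|\le G_1$ for a.e. $s$. This bounds the integral by $G_1\int_0^t e^{-\kappa_Y(t-s)}ds=G_1(1-e^{-\kappa_Y t})/\kappa_Y\le G_1/\kappa_Y$, which yields exactly the stated inequality.

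The final assertion follows by letting $t\to\infty$. The first term decays exponentially, so $\limsup_{t\to\infty}\|Y(t)-\Phi_Y(H_L(t),X_R(t),u^*)\|\le G_1/\kappa_Y=O(\kappa_Y^{-1})$. The hypothesis $\|\phi\|\le R_Y$ is not needed for the estimate itself; it only ensures that the quasi-steady graph lies in the admissible ball $B_{R_Y}$. By Proposition~\ref{prop:relaxation}, this makes the relaxation compatible with viability.

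The main (mild) obstacle is regularity. I will justify the product rule and the fundamental theorem of calculus for absolutely continuous functions rather than assuming $\phi\in C^1$. As a fallback, I could avoid differentiating $\phi$ altogether by differentiating $\tfrac12\|e\|^2$ a.e., which gives $\tfrac{d}{dt}\|e\|\le-\kappa_Y\|e\|+G_1$ where $e\ne0$, and then apply Gr\"onwall's inequality.
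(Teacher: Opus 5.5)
Your proposal is correct and follows the paper's proof essentially step for step. The paper likewise sets $W=Y-\phi$, writes $\dot W=-\kappa_Y W-\dot\phi$ a.e., applies variation of constants, and bounds the Duhamel integral by $G_1/\kappa_Y$; your justification via absolute continuity, and your observation that $\|\phi\|\le R_Y$ is not used in the estimate, only make explicit what the paper leaves implicit.
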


\begin{proof}
Set
\[
  W(t):=Y(t)-\phi(t).
\]
Since $\phi\in W^{1,\infty}_{\mathrm{loc}}$, we have for a.e. $t$
\[
  \dot W(t)
  =
  -\kappa_Y W(t)-\dot\phi(t).
\]
Variation of constants gives
\[
  W(t)
  =
  e^{-\kappa_Y t}W(0)
  -
  \int_0^t e^{-\kappa_Y(t-s)}\dot\phi(s)\,ds .
\]
Hence
\[
  \|W(t)\|
  \le
  e^{-\kappa_Y t}\|W(0)\|
  +
  G_1\int_0^t e^{-\kappa_Y(t-s)}\,ds
  \le
  e^{-\kappa_Y t}\|Y(0)-\phi(0)\|
  +
  \frac{G_1}{\kappa_Y}.
\]
\end{proof}

By Specification~\ref{bp:interconnector}(ii), $C_{R\to L,\ell}=\sum_{i\in V}\alpha_{\ell i}(Z)\mathcal{K}(\ell,i)e_i$
with $\mathcal{K}(\ell,i)\in\mathbb{R}^{d_L\times d_R}$ and $e_i\in\mathbb{R}^{d_R}$, so
$C_{R\to L,\ell}\in\mathbb{R}^{d_L}$ and $C_{R\to L}\in\mathbb{R}^{T\times d_L}$, matching
the token dimension of $\leftstate$. By Specification~\ref{bp:interconnector}(iii),
$C_{L\to R,i}=\sum_{\ell=1}^T\beta_{i\ell}(Z)\mathcal{K}(\ell,i)^\top H_{L,\ell}$
with $\mathcal{K}(\ell,i)^\top\in\mathbb{R}^{d_R\times d_L}$ and $H_{L,\ell}\in\mathbb{R}^{d_L}$,
so $C_{L\to R,i}\in\mathbb{R}^{d_R}$ and $C_{L\to R}\in\mathbb{R}^{|V|\times d_R}$, matching
the node embedding space. The routing compatibility score used in Specification~\ref{bp:controller}
is a scalar for $Z_i,Z_j\in\mathbb{R}^{d_Z}$.

\textit{Remark.}
\emph{i)} Admissible auxiliary modules may be appended by product extension.  If an auxiliary
state \(A\in K_A\) evolves according to a globally Lipschitz tangent-cone-compatible
vector field and enters the core equations through bounded Lipschitz maps with gains
within the small-gain budget, then the enlarged system remains admissible.

\emph{ii)} The $W^{1,\infty}$ hypothesis of Theorem \ref{thm:slowfast} is automatic in the smooth subregime when $G_Y$ has bounded first derivative on the compact state domain and the RFDE vector field is uniformly bounded. In the case of Lipschitz or nonsmooth architectural choices this is viewed as a regularity assumption along the considered trajectory.

\begin{example}
\emph{(a)} Let the symbolic graph be the complete graph \(G_L=K_3\) and the geometric
graph be the path graph \(G_R=P_3\), with Laplacians
\[
L_L=
\begin{pmatrix}
2&-1&-1\\
-1&2&-1\\
-1&-1&2
\end{pmatrix},
\qquad
L_R=
\begin{pmatrix}
1&-1&0\\
-1&2&-1\\
0&-1&1
\end{pmatrix}.
\]
Thus \(G_L\) is dense while \(G_R\) is sparse.  Let
\[
H(t),X(t)\in\mathbb R^3,
\qquad
Y(t),P(t)\in[-1,1],
\]
and fix the nonzero equilibrium
\[
H^*=X^*=e_1=(1,0,0)^\top,
\qquad
Y^*=P^*=0.
\]

Define state-dependent conductances
\[
Q(Y)=1+\delta_Q\tanh(Y),
\qquad
W(P)=1+\delta_W\tanh(P),
\qquad
0<\delta_Q,\delta_W<1.
\]
Equivalently,
\[
Q(Y)=Q^{\mathrm{base}}+\widetilde Q(Y),
\qquad
Q^{\mathrm{base}}=1-\delta_Q,
\qquad
\widetilde Q(Y)=\delta_Q(1+\tanh Y)\geq0,
\]
and
\[
W(P)=W^{\mathrm{base}}+\widetilde W(P),
\qquad
W^{\mathrm{base}}=1-\delta_W,
\qquad
\widetilde W(P)=\delta_W(1+\tanh P)\geq0.
\]
Thus we have the dense symbolic backbone \(K_3\) and the sparse geometric backbone \(P_3\). The state-dependent overlays \(\widetilde Q(Y)\) and \(\widetilde W(P)\) provide modulation.  Since
\[
\lambda_2(L_L)=3,
\qquad
\lambda_2(L_R)=1,
\]
we have
\[
\lambda_2(Q(Y)L_L)\geq 3(1-\delta_Q)>0,
\qquad
\lambda_2(W(P)L_R)\geq 1-\delta_W>0.
\]

Let
\[
K_0=
\begin{pmatrix}
1&1/2&0\\
0&1&1/2\\
1/2&0&1
\end{pmatrix}.
\]
Then
\[
\|K_0\|_{\mathrm{HS}}^2=\frac{15}{4},
\qquad
\|K_0\|_{\mathrm{HS}}=\frac{\sqrt{15}}2.
\]
Define gated reentrant kernels
\[
K_\alpha(Y)=g_\alpha(Y)K_0,
\qquad
g_\alpha(Y)=k+\sigma_\alpha\tanh(Y),
\]
and
\[
K_\beta^*(P)=g_\beta(P)K_0^\top,
\qquad
g_\beta(P)=k+\sigma_\beta\tanh(P).
\]
This furnishes a one-channel gated mixture kernel with fixed translation channel
\(K_0\).  A conservative uniform coupling bound, using
\[
\|K_0\|_{\mathrm{op}}\leq \|K_0\|_{\mathrm{HS}},
\]
is
\[
C_{\mathcal K}
=
\frac{\sqrt{15}}2\bigl(k+\max\{\sigma_\alpha,\sigma_\beta\}\bigr).
\]

Choose instantaneous reaction residuals
\[
F_L(H)
=
-\alpha_H(H-H^*)+L_LH^*-kK_0X^*,
\]
and
\[
F_R(X)
=
-\alpha_X(X-X^*)+L_RX^*-kK_0^\top H^*.
\]
Then the RFDE
\[
\dot H(t)
=
-Q(Y(t))L_LH(t)
+
F_L(H(t))
+
K_\alpha(Y(t))X(t-\tau_1),
\]
\[
\dot X(t)
=
-W(P(t))L_RX(t)
+
F_R(X(t))
+
K_\beta^*(P(t))H(t-\tau_2),
\]
\[
\dot Y(t)=-\kappa_Y Y(t),
\qquad
\dot P(t)=-\kappa_P P(t)
\]
has \(Z^*=(H^*,X^*,0,0)\) as an equilibrium.

Indeed, at \(Y=P=0\),
\[
Q(0)=W(0)=1,
\qquad
K_\alpha(0)=kK_0,
\qquad
K_\beta^*(0)=kK_0^\top,
\]
and the definitions of \(F_L,F_R\) give
\[
-L_LH^*+F_L(H^*)+kK_0X^*=0,
\qquad
-L_RX^*+F_R(X^*)+kK_0^\top H^*=0.
\]
Moreover,
\[
-\kappa_Y Y^*=0,
\qquad
-\kappa_P P^*=0.
\]

Centering at \(Z^*\) produces the additional state-dependent terms
\[
(K_\alpha(Y)-K_\alpha(0))X^*,
\qquad
(Q(Y)L_L-L_L)H^*,
\]
and
\[
(K_\beta^*(P)-K_\beta^*(0))H^*,
\qquad
(W(P)L_R-L_R)X^*.
\]
Their Lipschitz constants may be bounded by
\[
L_\alpha\leq \sigma_\alpha\|K_0\|_{\mathrm{HS}}
=
\frac{\sqrt{15}}2\sigma_\alpha,
\]
\[
L_\beta\leq \sigma_\beta\|K_0\|_{\mathrm{HS}}
=
\frac{\sqrt{15}}2\sigma_\beta,
\]
and
\[
L_Q=\delta_Q\|L_L\|_{\mathrm{op}}=3\delta_Q,
\qquad
L_W=\delta_W\|L_R\|_{\mathrm{op}}=3\delta_W.
\]
Since \(\|H^*\|=\|X^*\|=1\), the state-dependent perturbation load is
\[
M_{\mathrm{sdc}}
=
\max\left\{
\frac{\sqrt{15}}2\sigma_\alpha+3\delta_Q,\;
\frac{\sqrt{15}}2\sigma_\beta+3\delta_W
\right\}.
\]
Taking
\[
\mu_L=\alpha_H,
\qquad
\mu_R=\alpha_X,
\]
the strengthened small-gain condition becomes
\[
C_{\mathcal K}^2+M_{\mathrm{sdc}}^2<\alpha_H\alpha_X.
\]
This condition holds on a nonempty open parameter region: for fixed
\(\alpha_H,\alpha_X>0\), it is satisfied whenever
\[
k,\sigma_\alpha,\sigma_\beta,\delta_Q,\delta_W
\]
are sufficiently small. 

\emph{(b)} The auxiliary equations in the previous example may be replaced by bounded dissipative
valuation-executive dynamics
\[
\dot Y(t)
=
-\kappa_Y Y(t)
+
a_Y\tanh\bigl(r_Y(H(t),X(t),P(t))\bigr),
\]
and
\[
\dot P(t)
=
-\kappa_P P(t)
+
a_P\tanh\bigl(r_P(H(t),X(t),Y(t))\bigr),
\]
where \(r_Y\) and \(r_P\) are bounded Lipschitz readouts.  To preserve the displayed
equilibrium \(Y^*=P^*=0\), choose the readouts centered at \((H^*,X^*)\).  For instance,
in the \(K_3/P_3\) model one may take
\[
r_Y(H,X,P)
=
\frac13\sum_{i=1}^3(h_i-h_i^*)
+
\frac13\sum_{i=1}^3(x_i-x_i^*)
+
P,
\]
and
\[
r_P(H,X,Y)
=
\bigl((h_1-h_3)-(h_1^*-h_3^*)\bigr)
+
\bigl((x_1-x_3)-(x_1^*-x_3^*)\bigr)
+
Y.
\]
Then
\[
r_Y(H^*,X^*,0)=0,
\qquad
r_P(H^*,X^*,0)=0,
\]
so the equilibrium \(Z^*=(H^*,X^*,0,0)\) is retained.  The first readout is a centered
coarse symbolic-geometric activation signal modulated by executive context, while the
second is a centered endpoint-contrast signal modulated by valuation.

This modification is compatible with the compact viability assumptions.  If
\[
Y,P\in[-1,1],
\qquad
0<a_Y\leq \kappa_Y,
\qquad
0<a_P\leq \kappa_P,
\]
then, since \(|\tanh|\leq1\),
\[
Y=1\Rightarrow
\dot Y\leq -\kappa_Y+a_Y\leq0,
\qquad
Y=-1\Rightarrow
\dot Y\geq \kappa_Y-a_Y\geq0,
\]
and similarly
\[
P=1\Rightarrow
\dot P\leq -\kappa_P+a_P\leq0,
\qquad
P=-1\Rightarrow
\dot P\geq \kappa_P-a_P\geq0.
\]
Thus the intervals \([-1,1]\) for \(Y\) and \(P\) remain positively invariant.  The
additional terms are bounded Lipschitz auxiliary reactions and are handled by the same
auxiliary-cascade condition used in the stability theorem.
\end{example}

\section{Extensions and Interpretations}

\textit{Continuum limit.} A continuum limit would require a specified
graph-convergence regime as $T\to\infty$ and $|V|\to\infty$, a scaling under which
$\Delta_{\GL}(Q_L)$ and $\Delta_{\GR}(W_R)$ converge to limiting differential or integral
operators, convergence of the reentrant kernels $\Kcal$, and dissipativity estimates uniform
in the finite approximants. Under such additional hypotheses one may seek a limiting PFDE
on a space such as
\[
  L^2([0,1],\mathbb{R}^{d_L})\times L^2(\Omega,\mathbb{R}^{d_R}),
\]
together with upper semicontinuity of attractors. See also \citep{wu1996partial}.

\textit{Stochastic extension.} A stochastic RFDE extension would require a noise model compatible with the constrained state domain $\Zspace$. That requires for instance reflecting, projected, or tangent noise. Or alternatively, a reformulation on an unconstrained dissipative state
space. In infinite-dimensional versions, the covariance of the noise must also be chosen so
that the stochastic convolution is state-valued. The existence of invariant measures,
small-noise concentration, and corrections to the Lyapunov-Krasovskii functional are additional problems to be resolved.

\textit{Specification compliance and constraints.} The description in the appendix of this paper specifies the admissibility conditions an architecture must satisfy, and establishes consistency
properties from those conditions alone. The central challenge is reentrant instability: when the output of a module feeds back into its own input through a chain of translations, delays, and gain modulations, the closed loop may amplify instead of dampen perturbations, particularly when the controller routing matrix places large weight on a single feedback path and the delay $\tau$ is small relative to the characteristic relaxation time of the receiving module. Bounded-range conditions such as Specification~\ref{bp:symbolic}(ii) and
Specification~\ref{bp:geometric}(iv) constrain module outputs unconditionally, which provide one
line of defense. However, the gain of the full closed-loop system is not directly constrained
by any individual specification, and stability of the coupled field equations must be analyzed
separately for each implementation. Discrete-time integration introduces numerical stiffness
wherever diffusion coefficients are large relative to the integration step, requiring either
implicit solvers or carefully tuned step sizes. Each specification class contains both smooth
and non-smooth members. The non-smooth implementations within a class, such as piecewise-affine
awareness projections or sparse routing operators with support boundaries, satisfy all
specification conditions and the formal results but require additional care in gradient-based
learning to avoid support instability.

\textit{Neurogeometry and the geometric module.} The equivariant message-passing
formulation in the Geometric Module section is a high-level abstraction over a specific
cortical substrate. A potentially more anatomically faithful treatment of the dorsal visual
processing stream is available within the framework of neurogeometry \cite{petitot2003}. In this framework the primary visual cortex is modeled as a fiber bundle over the retinal surface.
At each point in the retinotopic map the fiber encodes the local orientation preference.
The bundle carries a contact structure which is inspired by the physiological observation that
cortical neurons sensitive to the same orientation and adjacent retinal positions are
connected preferentially through short-range lateral interactions. Cortical columns
constitute the local fibers of this bundle. Here, a hypercolumn spans the complete cycle of
orientation preferences at a given retinotopic location. The long-range horizontal connections
between hypercolumns implement the parallel transport of orientation signals across the
retinotopic plane. The rigid-frame transforms $g_{ij}\in\mathrm{SE}(d)$ correspond to
this fixed anatomical contact structure. The frame relation between two adjacent
hypercolumns is determined by the cortical wiring and does not change from moment to
moment, which is consistent with their treatment as fixed edge labels in the formal
architecture. The sparse awareness field $W_R$ then corresponds to the dynamic
selectivity of long-range horizontal projections, i.e. which connections are currently active
is a function of the ongoing computation. A full cortical
interpretation of the architecture using the classical terms of retinotopy, orientation
columns, hypercolumns, and long-range horizontal connectivity is plausible but is outside
the scope of the present text \citep{kobayashi1963foundations,bronstein2021geometric}.

\textit{Tradeoff between valuative and intellectual complexity.}
\citet[Ch.~11]{pugh1977} points out a systematic inverse relationship between the resolution
of the value system and the effective scope of intellectual deliberation. A value
organization with high dimensionality and fine differentiation, i.e.\ many drives, many
homeostatic variables, many neuromodulatory channels, directs intellectual resources onto
a narrow set of options and resolves behavioral ambiguity rapidly, at the cost of reduced
flexibility. A sparsely specified value organization admits a wider range of deliberated
options but introduces indeterminacy when competing courses of action generate similar
valuation signals. In the present architecture this relationship is expressed by the
coupling between the dimensionality and precision of the neuromodulatory vector
$\mu=(\mu_{\mathrm{DA}},\mu_{\mathrm{ACh}},\mu_{\mathrm{NE}},\mu_{5HT},\mu_{\mathrm{OP}})$
and the effective support of the routing matrix $\mathcal{R}_\controller$. This entails that a finer-grained valuative signal modulates salience scores $s_i$ with higher specificity, concentrating
routing mass on fewer target subsystems and leaving less of the symbolic and geometric
processing capacity open to unconstrained deliberation. The tradeoff is therefore a
structural property of any closed system in which valuation and deliberation share the same
routing infrastructure.

\textit{Metalearning and the evolutionary timescale.} The metacognitive layer described in
the Metacognition section operates on the timescale of individual learning, adjusting
routing parameters, reliability estimates, and norm weights in response to within-lifetime
prediction errors. A complete account of the value system would include a third temporal
tier in which the architecture of the neuromodulatory pathways, the functional form of the
homeostatic drive $f_h$, the prior distribution over world models, and the initial norm
weights $w_k^{(0)}$ are themselves shaped by selection pressure across evolutionary time.
This is the biological origin of primary values \citep{pugh1977}: the drives and affective
responses that present themselves as given within an individual lifetime are the product of
a long optimization over reproductive fitness across generations. In a synthetic system this
corresponds to a hyperprior over the parametric structure of the valuative module, with
the structural parameters of Specifications~\ref{bp:valuative}-\ref{bp:reliability} determined
by a process analogous to evolutionary search. 

\textit{Latent World Model.} It is instructive to take into consideration other natural choices for architectures that fit as the latent world-model mechanism. As a particular instance, the JEPA is admissible in the current framework when its encoders and predictor are treated as bounded Lipschitz latent maps on compact finite-dimensional state spaces, and its prediction error is routed into the reliability, valuative, memory, or interconnector dynamics. The JEPA prediction error becomes one of the system’s reliability errors $\epsilon_i$, feeding the metacognitive reliability update, valuative modulation, and possibly attention/awareness precision fields.

\textit{Effective field theory interpretation and future directions.}
We describe an analogy with effective field theory. The symbolic field $\leftstate$ and the geometric field $\rightstate$ are two dynamical systems defined on different base spaces: sequence
positions $\{1,\ldots,T\}$ and graph nodes $V$ respectively. In isolation each evolves
according to its own field equation (Specifications~\ref{bp:symbolic}
and~\ref{bp:geometric}). By analogy, these play the role of the free-field terms of the
effective action. The coupling kernel $\mathcal{K}$ of Specification~\ref{bp:interconnector}
plays the role of the interaction vertex: it assigns to each (sequence position, graph
node) pair a linear map through which the two fields exchange information. The
Hilbert-Schmidt norm bound $\|\mathcal{K}\|_{\mathrm{HS}}\leq C_\mathcal{K}$ plays a role
loosely reminiscent of a finiteness condition: it means that the total coupling strength is
finite regardless of the number of interacting modes. This should be viewed as a finite-dimensional norm bound, as opposed to a renormalizability result in the technical sense of quantum field theory.
The coupling bottleneck $d_R$ plays a role loosely reminiscent of a scale cutoff, i.e. all
information exchanged between the two fields is compressed to dimension $d_R$. The
remaining specifications correspond to modulating fields that adjust effective coupling
strengths and field parameters in response to the system's state. The Lipschitz
conditions bound the rate at which fields and couplings may vary. These conditions thereby play a role analogous to regularity conditions. The symmetry condition of
Specification~\ref{bp:geometric}(ii) is a kind of symmetry
consistency requirement. The inter-specification consistency conditions identified in the
Formal Closure section ensure that the effective description of each module is
compatible with the adjacent modules. A natural direction for further development is the
minimal coupling. By that we mean the determination of the smallest kernel $\mathcal{K}$ (in the sense of smallest $C_\mathcal{K}$) that reproduces the cognitive phenomena attributed to the architecture. This is the analogy for the minimal coupling principle in gauge theories, and it points toward a more axiomatic reformulation in which the field equations are derived from symmetry and minimality conditions rather than assembled from independently motivated modules. Furthermore, the most promising extensions are to replace the current compact-Lipschitz-dissipative admissibility regime with structural stability principles that use the architecture's own mechanisms. Concretely, the aim should be to prove stability for the full adaptive system with state-dependent attention, routing, valuation, memory, reliability, and policy dynamics. Then the strong residual damping should be replaced by incremental passivity or compositional small-gain conditions. Another aim is to show that prediction-error-driven reliability or valuation can self-regulate delayed reentrant coupling and enforce stability margins. One should analyze delay-dependent regimes where reentry produces oscillations, synchronization, or Hopf bifurcations. Moreover, develop continuum and large-graph limits with convergence of attractors and incorporate online learning as well as stochasticity through parameter dynamics or stochastic RFDEs. Finally, one should remove artificial compact projections by proving absorbing-set and attractor results under coercive dissipativity.

\appendix


\section{Symbolic Module}

The symbolic module should fulfill the following requirements. It must maintain a dense sequential workspace adequate for linguistic, rule-based, counterfactual, and justificatory representations. At the same time, it must remain open to grounding constraints arriving from non-symbolic systems. We verify this via the conditions below.

The symbolic module maintains the field $\leftstate\in\mathbb{R}^{T\times d_L}$. This field
represents utterances, plans, hypotheses, rules, obligations, and narrative summaries. In addition, it provides serial abstraction. It converts distributed system state into propositions that can
be compared, revised, explained, and remembered. The module can receive four classes of input:
broadcast from the controller, geometric grounding packets from the interconnector,
precision modulation from the valuative system, and norm constraints from the executive
state. The update therefore has the form
\[
\leftstate^+ = \mathcal T_L\bigl(\leftstate,\;B_\controller,\;C_{R\to L},\;Q_L,\;
\frontal,\;\memory\bigr).
\]
The precision field $Q_L\in\mathbb{R}^{T\times T}_{>0}$ modulates attention over symbolic
positions. This yields a positive field over token pairs, increased by precision demand and
decreased by uncertainty. 

\begin{blueprint}[Symbolic update operator and precision field]\label{bp:symbolic}
The symbolic update operator $\mathcal{T}_L$ and precision field $Q_L$ belong to the
admissible class $\mathfrak{C}_L$ if:
\begin{enumerate}[label=(\roman*)]
\item \emph{Regularity.} $\mathcal{T}_L$ is Lipschitz continuous in all arguments with
  Lipschitz constant $\mathrm{Lip}_L$.
\item \emph{Bounded range.} $\|\mathcal{T}_L(\cdot)\|_F \leq R_L$ for all inputs in the
  state domain.
\item \emph{Logit sensitivity.} For the softmax attention map with pre-softmax logit
  $q_{\ell s}$ and attention probability $p_{\ell s}=\mathrm{softmax}(q_{\ell\cdot})_s$,
  the derivative $\partial p_{\ell s}/\partial q_{\ell s}=p_{\ell s}(1-p_{\ell s})>0$
  confirms that increasing $Q_L(\ell,s)$ (which enters $q_{\ell s}$) strictly increases
  $p_{\ell s}$ within its row.
  Implementations using sparsemax satisfy weak monotonicity; strict increase holds on
  active coordinates while the active set is fixed and has cardinality greater than one.
\item \emph{Grounding sensitivity.} $\mathcal{T}_L$ depends non-trivially on $C_{R\to L}$:
  there exist inputs for which varying $C_{R\to L}$ changes the output of $\mathcal{T}_L$.
\item \emph{Positivity of $Q_L$.} $Q_L(t,s)>0$ for all token pairs $(t,s)$.
\item \emph{Monotonicity of $Q_L$.} $Q_L$ is monotone non-decreasing in the precision
  demand signal $\mu_{\mathrm{prec}}$ drawn from the valuative system.
\item \emph{Regularity of $Q_L$.} $Q_L$ is Lipschitz continuous in $\mu_{\mathrm{prec}}$
  and in the base logit field $q_L$.
\item \emph{Bounded positive parameterization.} Any admissible implementation produces
  $Q_L\in[\varepsilon_Q,R_Q]^{T\times T}$ for fixed architecture constants
  $0<\varepsilon_Q\leq R_Q$. A standard construction is
  $Q_L(\ell,s)=\varepsilon_Q+(R_Q-\varepsilon_Q)\sigma(\tilde{q}_L(\ell,s))$,
  where $\sigma$ is the logistic function and $\tilde{q}_L$ is any real-valued logit field.
\item \emph{Dissipativity.} The reaction term $F_L$ is one-sided Lipschitz with constant
  $-\nu_L<0$: for all $u,v$ in the state domain,
  \[
  \langle F_L(u)-F_L(v),\,u-v\rangle_F\;\leq\;-\nu_L\|u-v\|_F^2.
  \]
  The stability theorem uses $\mu_L$ for the dissipativity constant of the combined
  principal operator $-\Delta_{G_L}(Q_L)+F_L$. The graph-Laplacian diffusion contributes to
  $\mu_L$ alongside the reaction term.
\end{enumerate}
\end{blueprint}

\textit{Existence argument.} A concrete element of $\mathfrak{C}_L$ is the stack of
$L_{\mathrm{sym}}$ transformer blocks \citep{vaswani2017attention} with additional normalization. One step is layer normalization \citep{ba2016layer}, applied after each attention sublayer and feed-forward sublayer.
Condition~(i): each sublayer operation, scaled dot-product attention, feed-forward
network, layer normalization, is Lipschitz on bounded inputs. Their composition over
$L_{\mathrm{sym}}$ layers is Lipschitz by the chain rule, with constant $\mathrm{Lip}_L$ depending
on depth, embedding dimension, and the spectral norms of all weight matrices (query, key,
value, and feed-forward projections). The Lipschitz claim requires bounded spectral norms
as a standing assumption; in practice, spectral normalization \citep{miyato2018spectral} or the radial projection already appended in condition~(ii) serves this purpose. Layer normalization with damping parameter $\varepsilon>0$,
$\mathrm{LN}_\varepsilon(x)=\gamma\odot(x-\bar x)/\sqrt{\mathrm{Var}(x)+\varepsilon}+\beta$,
has Lipschitz constant bounded by $2\|\gamma\|_\infty/\sqrt{\varepsilon}$. For $\varepsilon>0$
fixed this constant is finite, and the overall Lipschitz constant $\mathrm{Lip}_L$ of the composed
network is bounded accordingly.
Condition~(ii): residual connections accumulate across layers and layer normalization alone
fails to bound the output norm of a standard residual transformer. To verify condition~(ii)
unconditionally, the architecture appends a final projection $\leftstate\leftarrow\Pi_{B_{R_L}}(\leftstate)$ (clipping to the Frobenius-norm ball of radius $R_L$) after the last transformer block; this satisfies Specification~\ref{bp:symbolic}(ii) by construction and is Lipschitz with constant one. Condition~(iii): $Q_L$ enters the attention logit as a positive additive term. Increasing
$Q_L(\ell,s)$ raises the pre-softmax logit of the corresponding position pair, increasing
the attention weight within that row. Condition~(iv): $C_{R\to L}$ is an additive contribution to the
initial hidden state $\leftstate^{(0)}$; any non-zero grounding packet shifts $\leftstate^{(0)}$
and propagates through all subsequent layers. Conditions~(v)-(viii): use the logistic construction from condition~(viii), $Q_L(\ell,s)=\varepsilon_Q+(R_Q-\varepsilon_Q)\,\sigma(\tilde{q}_L(\ell,s))$,
with $\tilde{q}_L(\ell,s)=a_L(\ell,s)+\mu_{\mathrm{prec}}\,b_L(\ell,s)$ and $b_L\ge0$ pointwise.
Bounded range: $\sigma$ maps $\mathbb{R}$ into $(0,1)$, so $Q_L\in(\varepsilon_Q,R_Q)\subset[\varepsilon_Q,R_Q]$ by construction.
Positivity: $Q_L>\varepsilon_Q>0$.
Monotonicity in $\mu_{\mathrm{prec}}$: $\partial Q_L/\partial\mu_{\mathrm{prec}}=(R_Q-\varepsilon_Q)\,\sigma'(\tilde{q}_L)\,b_L\ge0$, with strict increase whenever $b_L>0$.
Lipschitz continuity: $\sigma$ is $\tfrac{1}{4}$-Lipschitz, $\tilde{q}_L$ is linear in
$(\mu_{\mathrm{prec}},a_L,b_L)$, and the composition is Lipschitz on any compact parameter domain.

\textit{Dissipativity construction.} A graph Laplacian has a zero eigenvalue whose
eigenvector is the constant field. Since diffusion alone cannot contract the constant mode,
strict dissipativity of the symbolic block must come from the reaction term.
A sufficient construction is the \emph{residual damped transformer}:
\[
F_L(\leftstate,\ldots)\;=\;-\alpha\leftstate+\Phi_L(\leftstate,\ldots),
\qquad\mathrm{Lip}(\Phi_L)<\alpha,
\]
which gives
$\langle F_L(H)-F_L(H'),H-H'\rangle_F
\leq(-\alpha+\mathrm{Lip}(\Phi_L))\|H-H'\|_F^2\leq-\mu_L\|H-H'\|_F^2$
with $\mu_L=\alpha-\mathrm{Lip}(\Phi_L)>0$.
A standard transformer block with spectral-norm-bounded weights and a residual
linear damping term $-\alpha H$ satisfies this when
$\mathrm{Lip}(\Phi_L)\leq\alpha-\mu_L$ for some $\mu_L>0$.

\textit{Spectral gap of the precision field.} Under the standing assumption that $\GL$ is
connected and $Q_L\geq\varepsilon_Q>0$ (condition~(viii)), the symmetric part of
$\Delta_{\GL}(Q_L)$ has spectral gap at least $\varepsilon_Q\lambda_2(\GL)>0$, where
$\lambda_2(\GL)$ is the second eigenvalue of the symmetrized Laplacian (valid for
undirected graphs, compare the description in Section~\ref{sec:formal_closure}).
Higher $Q_L$ pointwise yields faster mixing of $\leftstate$ toward its quasi-stationary
configuration. The monotonicity condition~(vi) expresses this at the level of the
modulating signal. We need to explain next how to ensure the lower bound $Q_L\geq\varepsilon_Q>0$. Note that Softmax attention has strictly positive entries, but fails to supply an input-uniform lower bound: attention weights can be exponentially small, and for long sequences even uniform attention scales like $1/T$. Sparse attention can set entries exactly to zero. Moreover, transformer attention is generally directed, whereas the spectral-gap estimate above is a statement about a
symmetric conductance Laplacian. In the present framework, $Q_L$ is therefore a formal
precision/conductance field. A realistic way to obtain the required gap is by adding a connected
residual backbone or floor,
\[
Q_L(Z)=Q_{\mathrm{base}}+\widetilde Q_L(Z),
\qquad
Q_{\mathrm{base}}\geq\varepsilon_Q \ \text{on a fixed connected graph }G_0,
\qquad
\widetilde Q_L(Z)\geq0,
\]
or equivalently by using a floored symmetric conductance field derived from attention.
Then
\[
\lambda_2(\Delta_{\GL}(Q_L))
\geq
\lambda_2(\Delta_{G_0}(Q_{\mathrm{base}}))>0,
\]
independently of whether the attention component becomes sharp, sparse, or directed. 

\textit{Coarse-graining.} The coarse-graining map $\Phi_L$ extracts the abstract classes
$\mathcal{A}_L$ (dissipative operator on $L^2(\GL,\mathbb{R}^{d_L})$ with
$\mathrm{Aut}(\GL)$-equivariance, dissipativity constant $\mu_L>0$) and $\mathcal{A}_{Q_L}$
(edge weight function $w_L\colon E_L\to\mathbb{R}_{>0}$ with $\inf w_L\geq\varepsilon_Q$).
Microscopic details (layer count, head count, feed-forward dimension) are irrelevant to
$\mathcal{A}_L$; see Section~\ref{sec:coarsegrained}.

\section{Geometric Module}

The rationale for the geometric module is that it should supply the system with a structured world. This module detects that a plan is impossible, that a path is blocked, that an action threatens another agent, or that a safe route exists. The representations are local and relational. They preserve structure under transformation. The geometric module therefore functions as the system's spatial and affordance based contact with its environment. The geometric module should represent the world as a sparse equivariant relational structure and the awareness field selects objects, agents, affordances, and transformations relevant to action. 

The geometric module maintains the field
$\rightstate=\{e_i\in\mathbb{R}^{d_R},\,g_{ij}\in\mathrm{SE}(d)\}_{(i,j)\in E}$. Its
state is a sparse structure over different kinds of vertices. These can include objects, body parts, regions, agents, paths, hazards, and affordances \citep{spelke2007core,locatello2020objectcentric}. The edges carry relations such as proximity, containment, occlusion, reachability, support, collision risk, and causal influence \citep{scholkopf2021causal}. A convenient way to represent transformations between local frames is via groupoid morphisms \citep{brown1987groupoids}. In our setting here, the rigid-frame transforms $g_{ij}\in\mathrm{SE}(d)$ are treated as fixed structural edge labels set at
initialization that are not updated by the field equations. The node embeddings $e_i$ are
dynamic state variables. The sparse awareness field $W_R\in[0,1]^{|E|}$ selects which relations should be propagated, exported, or used for action. The module's update has the form
\[
\rightstate^+ = \mathcal G_R\bigl(\rightstate,\;B_\controller,\;C_{L\to R},\;\valuative,\;
\frontal,\;W_R,\;\memory\bigr).
\]
The field $C_{L\to R}$ injects symbolic goals, labels, hypotheses, and norms as constraints
on graph propagation.

\begin{blueprint}[Geometric update operator and awareness field]\label{bp:geometric}
The geometric update operator $\mathcal{G}_R$ and awareness field $W_R$ belong to the
admissible class $\mathfrak{C}_R$ if:
\begin{enumerate}[label=(\roman*)]
\item \emph{Regularity.} $\mathcal{G}_R$ is Lipschitz continuous in all arguments with
  constant $\mathrm{Lip}_R$.
\item \emph{Symmetry.} $\mathcal{G}_R$ is permutation-equivariant under $\mathrm{Aut}(G)$
  (reindexing nodes reindexes outputs correspondingly) and $\mathrm{SE}(d)$-invariant in the
  scalar feature sector. The group
  $\mathrm{SE}(d)$ acts on the fixed edge labels $g_{ij}$, and the update is invariant to
  that action because it enters only through $\mathrm{SE}(d)$-invariant quantities such as
  $\|r_{ij}\|^2$.
\item \emph{Sparsity preservation.} $\mathcal{G}_R$ depends on $\rightstate$ through
  $W_R$-weighted aggregations over the active support $\{(i,j):W_{R,ij}>0\}$; edges outside
  the support do not contribute to the update.
\item \emph{Bounded range.} $\|e_i^+\|\leq R_R$ for all nodes $i$ and all inputs in the
  state domain.
\item \emph{Simplex membership of $W_R$.} For each target node $j$, the incoming awareness
  weights satisfy $\sum_{i:(i,j)\in E}W_{R,ij}=1$ and $W_{R,ij}\geq 0$; $W_R$ therefore
  lies in the product of probability simplices $\prod_j\Delta^{k_j}$, where $k_j$ is the
  in-degree of node $j$.
\item \emph{Regularity of $W_R$.} $W_R$ is Lipschitz continuous in the awareness logit
  field $\omega\in\mathbb{R}^{|E|}$, which is itself a Lipschitz function of $\rightstate$
  and the novelty signal $\mu_{\mathrm{nov}}$ from the valuative system.
\item \emph{Sparsity bound of $W_R$.} $|\mathrm{supp}(W_R)|\leq K_{\mathrm{sp}}$ for some
  fixed $K_{\mathrm{sp}}$ independent of $|E|$. This bound is enabled by a bounded-in-degree
  assumption on the graph: $\sup_j k_j\leq K_{\max}$ for some fixed $K_{\max}$, so that
  $K_{\mathrm{sp}}\leq |V|\cdot K_{\max}$.
\item \emph{Dissipativity.} The reaction term $F_R$ is one-sided Lipschitz with constant
  $-\nu_R<0$: for all $u,v$ in the state domain,
  \[
  \langle F_R(u)-F_R(v),\,u-v\rangle\;\leq\;-\nu_R\|u-v\|^2.
  \]
  This condition is compatible with $\mathrm{SE}(d)$-invariance of the scalar feature sector.
  The stability theorem uses $\mu_R$ for the dissipativity constant of the combined
  principal operator $-\Delta_{G_R}(W_R)+F_R$. The graph-Laplacian diffusion contributes to
  $\mu_R$ alongside the reaction term.
\end{enumerate}
\end{blueprint}

\textit{Existence argument.} A concrete element of $\mathfrak{C}_R$ is the scalar-feature
sector of an EGNN-style architecture in the sense of \citet{satorras2021egnn}.
Each node $i\in V$ carries an embedding $e_i\in\mathbb{R}^{d_R}$ and each directed edge
$(i,j)\in E$ carries a rigid-frame transform $g_{ij}=(R_{ij},t_{ij})\in\mathrm{SE}(d)$
used as a fixed edge attribute, with relative position vector $r_{ij}:=t_{ij}\in\mathbb{R}^d$
extracted from its translational component.
Under a global rigid motion $(R,t)\in\mathrm{SE}(d)$ applied to node positions,
$r_{ij}\mapsto R\,r_{ij}$, so $\|r_{ij}\|^2$ is invariant.
In this instantiation the position-coordinate dependence is captured
entirely by $r_{ij}$, which enters only through the $\mathrm{SE}(d)$-invariant distance
$\|r_{ij}\|^2$. The scalar features $e_i$ are therefore $\mathrm{SE}(d)$-invariant.
The messages are computed as $m_{ij}=\phi_e(e_i,e_j,\|r_{ij}\|^2,a_{ij})$ where $a_{ij}$ encodes additional edge attributes and $\mathrm{SE}(d)$-invariance holds because $m_{ij}$ depends on $g_{ij}$ only through $\mathrm{SE}(d)$-invariant quantities (distances and inner products). The permutation-equivariance holds because the node update $e_j^+=\phi_h(e_j,\sum_{i\in S^*}W_{R,ij}m_{ij})$
reindexes correctly under any $\sigma\in\mathrm{Aut}(G)$. This
architecture satisfies the scalar-feature part of condition~(ii), following the invariant
message construction, cf. \citet{satorras2021egnn}.
Condition~(i): the message and update functions $\phi_e,\phi_h$ are MLPs with layer
normalization ($\varepsilon>0$, Lipschitz constant $2\|\gamma\|_\infty/\sqrt{\varepsilon}$)
and bounded range, giving a finite Lipschitz constant $\mathrm{Lip}_R$ since each factor is Lipschitz.
Condition~(iii) holds by
definition of the awareness-weighted message $m_j=\sum_{i\in S^*}W_{R,ij}m_{ij}$,
where only active-support edges contribute. Condition~(iv): layer normalization by itself is insufficient, since it fails to imply that $\|e_j^+\|\leq R_R$ when residual connections or subsequent affine maps are present. To enforce bounded range unconditionally, the architecture appends a final projection $e_j\leftarrow\Pi_{B_{R_R}}(e_j)$ (clipping to the Euclidean ball of radius $R_R$) after the last EGNN layer and this satisfies Specification~\ref{bp:geometric}(iv) by
construction, i.e. is Lipschitz with constant one.
Conditions~(v)-(vii) are satisfied by the sparsemax projection \citep{martins2016softmax}
$W_{R,ij}=\max(\omega_{ij}-\tau,0)$, where $\tau$ is the threshold enforcing
$\sum_{i\in S^*}W_{R,ij}=1$: simplex membership holds by construction. The Lipschitz continuity
holds because the sparsemax map is the Euclidean projection onto the simplex, which is
non-expansive. The sparsity bound holds because $|S^*|$ is bounded by the in-degree $k_j\leq K_{\max}$, and summing over all nodes gives $|\mathrm{supp}(W_R)|\leq |V|\cdot K_{\max}=:K_{\mathrm{sp}}$.
Softmax has full support over all edges, so it satisfies the sparsity bound of
condition~(vii) only if $K_{\mathrm{sp}}=|E|$ (i.e.\ the bound is vacuous) or the total
edge count is bounded. Strict sparsity requires a different approach.
Top-$K$ normalization satisfies conditions~(v) and~(vii) but is discontinuous at ties,
placing it outside the admissible class for the formal Lipschitz results. Other Lipschitz
projections onto the simplex with bounded support (e.g.\ sparsemax with bounded in-degree)
are admissible within $\mathfrak{C}_R$ provided they satisfy conditions~(v)-(vii).

\begin{figure}[ht]
\centering
\begin{tikzpicture}[
  obj/.style={circle, draw, minimum size=8mm, font=\scriptsize},
  edge/.style={-{Latex[length=1.8mm]}, thick},
  aware/.style={-{Latex[length=1.8mm]}, thick, blue},
  dim/.style={-{Latex[length=1.8mm]}, dashed, gray}
]
\node[obj] (a) at (0,0) {$e_1$};
\node[obj] (b) at (2,1) {$e_2$};
\node[obj] (c) at (3,-1) {$e_3$};
\node[obj] (d) at (5,0) {$e_4$};
\node[obj] (e) at (1,-2) {$e_5$};

\draw[aware] (a) -- node[above,font=\tiny]{$W_{12}$} (b);
\draw[aware] (b) -- node[above,font=\tiny]{$W_{24}$} (d);
\draw[dim] (c) -- (d);
\draw[dim] (e) -- (a);
\draw[aware] (b) -- node[right,font=\tiny]{$W_{23}$} (c);
\draw[dim] (a) -- (e);
\end{tikzpicture}
\caption{A sparse right-side representation. Blue arrows carry active awareness weight;
grey arrows are suppressed. Awareness is allocated to relations that affect action, risk,
body state, or other agents.}
\end{figure}

\textit{Dissipativity construction.} As with the symbolic block, the graph-Laplacian
diffusion does not contract constant-field modes. The strict dissipativity of the geometric
block must come from the reaction term. A sufficient construction is:
\[
F_R(\rightstate,\ldots)\;=\;-\alpha_R\rightstate+\Phi_R(\rightstate,\ldots),
\qquad\mathrm{Lip}(\Phi_R)<\alpha_R,
\]
giving $\langle F_R(e)-F_R(e'),e-e'\rangle
\leq-\mu_R\|e-e'\|^2$ with $\mu_R=\alpha_R-\mathrm{Lip}(\Phi_R)>0$.
The EGNN existence argument establishes Lipschitz and bounded range. Augmenting with a
residual linear damping $-\alpha_R e$ satisfies dissipativity condition~(viii).

\textit{Spectral gap of the awareness field.}
The simplex condition on \(W_R\) gives nonnegative normalized awareness weights. In the formal undirected regime,the symmetric awareness conductance is obtained from \(W_R\), for example by
\[
\overline W_{R,ij}=\tfrac12(W_{R,ij}+W_{R,ji}),
\]
and the relevant Laplacian is \(\Delta_{\GR}(\overline W_R)\). A positive gap holds
whenever the symmetrized active support
\[
E_R(Z)=\{\{i,j\}:\overline W_{R,ij}(Z)>0\}
\]
is connected.  Equivalently,
\[
\lambda_2(\Delta_{\GR}(\overline W_R))>0
\]
is an additional connectivity condition on the awareness graph. For sparse awareness fields this is the intended interpretation: \(W_R\) may be highly selective, provided its symmetrized active support
remains connected, or provided a fixed sparse residual backbone supplies the gap.
In the latter case one may write
\begin{align*}
& \overline W_R(Z)=W_{\mathrm{base}}+\widetilde W_R(Z), \
W_{\mathrm{base}}\geq \varepsilon_R>0 \\
&\text{on a fixed connected sparse graph }G_{\mathrm{base}}, \
\widetilde W_R(Z)\geq0,
\end{align*}
which gives the uniform lower bound
\[
\lambda_2(\Delta_{\GR}(\overline W_R(Z)))
\geq
\lambda_2(\Delta_{G_{\mathrm{base}}}(W_{\mathrm{base}}))>0.
\]
More generally, for a sparse connected active graph, Cheeger-type estimates relate the
spectral gap to the conductance \(h(\GR,\overline W_R)\), for instance
\[
\lambda_2(\Delta_{\GR}(\overline W_R))
\gtrsim h(\GR,\overline W_R)^2,
\]
up to the normalization convention for the graph Laplacian. The gap is supplied by connected
conductance support or by an explicitly assumed residual backbone.

\textit{Coarse-graining.} The coarse-graining map $\Phi_R$ extracts the abstract classes
$\mathcal{A}_R$ (dissipative operator on $L^2(\GR,\mathbb{R}^{d_R})$ with
$\mathrm{SE}(d)$-invariance in the scalar feature sector, $\mathrm{Aut}(\GR)$-equivariance, dissipativity constant $\mu_R>0$)
and $\mathcal{A}_{W_R}$ (edge weight function $w_R\colon E_R\to\mathbb{R}_{\geq 0}$ on
simplices with connected support). Microscopic details (e.g. EGNN layer count, message-passing
readout, MLP width) are not relevant to $\mathcal{A}_R$; see Section~\ref{sec:coarsegrained}.

\section{Interconnector}

The interconnector should act as a delayed, gated translation operator between heterogeneous latent spaces. It is supposed to preserve specialization while allowing coordinated behavior. The function of the interconnector can be described as disciplined exchange. The left side sends hypotheses such as "test whether this path is safe." The right side sends constraints such as
"this action would collide with another agent." Its healthy regime is partial
synchronization: enough coupling to coordinate, enough separation to preserve distinct
computations.

This module is supposed to play the role of the analogue of the corpus callosum. It is defined as a pair of translation maps with routing gains and delays. The dense symbolic content is turned into into graph constraints, labels, goals, and queries by the interconnector. The sparse geometric content is transformed into symbolic packets, contradictions, affordance summaries, and risk statements. We write the interconnector as
\[
C_{R\to L}=g_{R\to L}\,\Phi_{R\to L}\!\bigl(\rightstate(t-\tau_{R\to L})\bigr),
\qquad
C_{L\to R}=g_{L\to R}\,\Phi_{L\to R}\!\bigl(\leftstate(t-\tau_{L\to R})\bigr),
\]
where $\tau_{R\to L}$ and $\tau_{L\to R}$ are non-negative delay parameters and
$g_{R\to L},g_{L\to R}\in[0,1]$ are scalar gates controlled by the valuative and executive
state. The delays allow phase, anticipation, and synchronization effects. These translation
maps are calibrated by experience.

\begin{blueprint}[Coupling kernel and interconnector]\label{bp:interconnector}
The interconnector is characterized by a coupling kernel
$\mathcal{K}:\{1,\ldots,T\}\times V\to\mathbb{R}^{d_L\times d_R}$ that assigns to each
(sequence position, graph node) pair a linear map from the geometric to the symbolic
embedding space. The interconnector belongs to the admissible class
$\mathfrak{C}_\mathcal{I}$ if:
\begin{enumerate}[label=(\roman*)]
\item \emph{Bounded coupling.}
  $\|\mathcal{K}\|_{\mathrm{HS}}^2 := \sum_{\ell=1}^{T}\sum_{i\in V}\|\mathcal{K}(\ell,i)\|_F^2
  \leq C_\mathcal{K}^2$. If $\mathcal{K}$ depends on $Z$, it is Lipschitz in $Z$ with
  constant $L_\mathcal{K}$.
\item \emph{Right-to-left signal.}
  $C_{R\to L,\ell} = \sum_{i\in V}\alpha_{\ell i}(Z)\,\mathcal{K}(\ell,i)\,e_i$,
  where $\alpha(\cdot,Z)\in\Delta^{|V|-1}$ for each $\ell$ and is Lipschitz in $Z$.
\item \emph{Left-to-right signal.}
  $C_{L\to R,i} = \sum_{\ell=1}^{T}\beta_{i\ell}(Z)\,\mathcal{K}(\ell,i)^{\!\top} H_{L,\ell}$,
  where $H_{L,\ell}\in\mathbb{R}^{d_L}$ is the $\ell$-th token embedding of $\leftstate$,
  and $\beta(\cdot,Z)\in\Delta^{T-1}$ for each $i$ and is Lipschitz in $Z$.
\end{enumerate}
The dimension $d_R$ is the geometric embedding dimension already introduced for $\rightstate$;
it acts as the \emph{coupling bottleneck}: all geometric information reaching the symbolic
field, and all symbolic information reaching the geometric field, passes through this
dimension. Gate values $g_{R\to L}$ and
$g_{L\to R}$ are entries of $\mathcal{R}_\controller\in[0,1]^{n_s\times n_s}$; their
boundedness is guaranteed by Specification~\ref{bp:controller}(i) and requires no separate
gate condition here. The delay scalars $\tau_{R\to L}\ge0$ and $\tau_{L\to R}\ge0$ are
fixed architecture constants. Thereby, these are not components of
$Z$ which would be updated within a step (see Standing Assumption and variable-type table in
the Formal Closure section).
\end{blueprint}

\textit{Existence argument.} The simplest element of $\mathfrak{C}_\mathcal{I}$ is the
state-independent constant kernel $\mathcal{K}(\ell,i)=W_{RL}\in\mathbb{R}^{d_L\times d_R}$
for all $(\ell,i)$. Its Hilbert-Schmidt norm is $\sqrt{T|V|}\,\|W_{RL}\|_F$, bounded by
the architectural constraint $\|W_{RL}\|_F\leq C_{RL}:=C_\mathcal{K}/\sqrt{T|V|}$
which is enforced by projecting $W_{RL}$ onto the Frobenius-norm ball after each update, because weight decay alone does not give a hard bound. With this scaling the HS norm satisfies
$\|\mathcal{K}\|_{\mathrm{HS}}=\sqrt{T|V|}\,\|W_{RL}\|_F\leq C_\mathcal{K}$, and the
coupling strength per interaction pair scales as $1/\sqrt{T|V|}$, so the total coupling
remains bounded as the number of interacting modes grows. With simplex weights $\alpha_{\ell i}$ computed by any Lipschitz function of $Z$ (e.g.\ a sparsemax over dot products between token and node features), the right-to-left signal becomes $C_{R\to L,\ell}=W_{RL}\,\bar{e}_\ell$ where
$\bar{e}_\ell=\sum_i\alpha_{\ell i}\,e_i\in\mathbb{R}^{d_R}$ which recovers the
bounded linear map as a special case. A more informative instance is the
cross-attention kernel: $\mathcal{K}(\ell,i)=W_V$ (a learned value projection, still
constant in $(\ell,i)$), with $\alpha_{\ell i}=\mathrm{sparsemax}(q_\ell^\top k_i/\sqrt{d_\mathcal{K}})_i$ where $q_\ell=W_Q H_{L,\ell}$ and $k_i=W_K e_i$ are Lipschitz functions of $Z$. The Lipschitz constants depend on the spectral norms $\|W_Q\|$, $\|W_K\|$, $\|W_V\|$, which are
assumed to be bounded: $\|W_Q\|,\|W_K\|,\|W_V\|\leq C_W$ for some fixed architecture
constant $C_W$. The coupling weights are now sequence- and node-dependent while the kernel
matrix remains shared.
A fully $(\ell,i)$-dependent kernel $\mathcal{K}(\ell,i,Z)=f(H_{L,\ell},e_i)$ for some
Lipschitz $f$ is the most general admissible instance. The HS norm of that kernel is bounded when $f$
has bounded Lipschitz constant and the state domain is compact. We include an overview of these different kernel types below.

The delay parameters $\tau_{R\to L}$ and $\tau_{L\to R}$ are treated here as externally
specified schedule constants which means that they are initialized from structural priors and may be
adjusted between episodes, but are fixed during any single rollout. They are not components of $Z$ and have no within-step update rule. If online adaptation of delays is desired, the update rule and bounds must be included in an augmented Markov state, which is outside the scope of the present analysis. The interconnector gates are specific entries of the routing matrix $\mathcal{R}_\controller$: $g_{R\to L}=\mathcal{R}_{\controller,R\to L}$ and
$g_{L\to R}=\mathcal{R}_{\controller,L\to R}$, where the entries are already in $[0,1]$
by the routing specification below.

\begin{figure}[ht]
\centering
\begin{tikzpicture}[
  box/.style={draw, rounded corners, align=center, minimum width=32mm, minimum height=12mm,
    font=\small},
  op/.style={draw, diamond, aspect=1.8, align=center, font=\scriptsize, inner sep=1mm},
  arrow/.style={-{Latex[length=2mm]}, thick}
]
\node[box] (left) at (0,0) {Symbolic Module\\$\leftstate$};
\node[box] (right) at (8,0) {Geometric Module\\$\rightstate$};
\node[op] (tlr) at (4,1.2) {$\Phi_{L\to R}$};
\node[op] (trl) at (4,-1.2) {$\Phi_{R\to L}$};
\node[box] (ctrl) at (4,3) {Controller gates\\$g,\tau$};
\draw[arrow] (left.east) .. controls (2,1.2) .. (tlr.west);
\draw[arrow] (tlr.east) .. controls (6,1.2) .. (right.west);
\draw[arrow] (right.west) .. controls (6,-1.2) .. (trl.east);
\draw[arrow] (trl.west) .. controls (2,-1.2) .. (left.east);
\draw[arrow] (ctrl.south) -- (tlr.north);
\draw[arrow] (ctrl.south) to[out=260, in=100, looseness=1.4] (trl.north);
\end{tikzpicture}
\caption{The interconnector translates across heterogeneous latent spaces and is gated by
the controller. The translation operators $\Phi_{R\to L}$ and $\Phi_{L\to R}$ are bounded
maps; their linear form is a special instance of Specification~\ref{bp:interconnector}.}
\end{figure}

\textit{Coarse-graining.} The coarse-graining map $\Phi_\Kcal$ extracts the abstract class
$\mathcal{A}_\Kcal$: Hilbert-Schmidt bipartite operators
$\Kcal\colon L^2(\GR,\mathbb{R}^{d_R})\to L^2(\GL,\mathbb{R}^{d_L})$ with
$\|\Kcal\|_{\mathrm{HS}}\leq C_\Kcal$.  The attention weights $\alpha_{\ell i}$, gating
functions, and concrete cross-attention implementation enter the Lipschitz RFDE class
through the maps $Z\mapsto \Kcal_\alpha(Z)$ and $Z\mapsto \Kcal_\beta^*(Z)$; the closed stability
class freezes them to fixed bounded operators; see Section~\ref{sec:coarsegrained}.

\subsection*{Kernel families and Hilbert-Schmidt scaling}\label{Kernel}

The Hilbert-Schmidt constraint $\|\Kcal(Z)\|_{\mathrm{HS}}\le C_\Kcal$ is an energy budget for
symbolic-geometric reentry. The budget interacts with system size in different ways
depending on the kernel family.

\textit{Constant shared kernel.} $\Kcal(\ell,i)=W_{RL}$ for all $(\ell,i)$.  Then
$\|\Kcal\|_{\mathrm{HS}}=\sqrt{T|V|}\,\|W_{RL}\|_F$, so the budget requires
$\|W_{RL}\|_F\le C_\Kcal/\sqrt{T|V|}$.  Dense all-to-all coupling must scale inversely with
the square root of the number of token-node pairs.  

\textit{Attention-weighted translation kernel.} $\Kcal_\alpha(\ell,i;Z)=\alpha_{\ell
i}(Z)W_V$ with $\alpha_\ell(\cdot,Z)\in\Delta^{|V|-1}$.  Since attention weights are
simplex-valued, $\sum_i\alpha_{\ell i}^2\le1$ and
$\|\Kcal_\alpha(Z)\|_{\mathrm{HS}}^2\le T\|W_V\|_F^2$, so the budget requires only
$\|W_V\|_F\le C_\Kcal/\sqrt{T}$, removing the $\sqrt{|V|}$ penalty of the constant kernel.

\textit{Low-rank structured kernel.} $\Kcal(\ell,i)=\sum_{r=1}^m a_r(\ell)b_r(i)A_r$
with $\sum_\ell a_r^2\le1$ and $\sum_i b_r^2\le1$.  Then
$\|\Kcal\|_{\mathrm{HS}}\le\sum_r\|A_r\|_F$, independent of $T$ and $|V|$.  Each rank channel
$r$ can encode a mode of cross-domain alignment (object identity, affordance, threat, task
relevance).

\textit{Gated mixture kernel.} $\Kcal(\ell,i;Z)=\sum_{r=1}^m g_r(Z)\Kcal_r(\ell,i)$
with $0\le g_r(Z)\le\bar g_r$.  Then $\|\Kcal(Z)\|_{\mathrm{HS}}\le\sum_r\bar g_r\|\Kcal_r\|_{\mathrm{HS}}$.
Executive and valuative variables can open and close coupling channels without violating the
energy budget.  State-dependent gates contribute additional Lipschitz terms to the RFDE and
must be accounted for in the state-dependent stability margin.

\textit{Low rank gated attention.} The following variant combines gating, low-rank
structure, and attention:
\begin{equation}\label{eq:bestkernel}
  \Kcal(\ell,i;Z)=\sum_{r=1}^m g_r(Z)\,a_r(\ell,Z)\,b_r(i,Z)\,A_r,
  \qquad \|\Kcal(Z)\|_{\mathrm{HS}}\le C_\Kcal.
\end{equation}
This type of kernel can represent multiple reentrant channels and gives executive
and valuative variables a precise role as channel gates. For the global
stability theorem, the gates are frozen to their equilibrium values, making $\Kcal$ a fixed
Hilbert-Schmidt operator.
This recommendation presupposes a compute budget adequate for dynamic gating. The constant shared kernel is adequate for small $T$ and $|V|$.

\textit{High dimensional scaling.} Since dense unnormalized kernels have HS-norm scaling like \(\sqrt{d_Ld_R}\), or like \(\sqrt{T|V|d_Ld_R}\) for dense token-node coupling, they shrink the admissible small-gain margin for a reentry gain \(k\). Therefore, one should select low-rank normalized channels \(K_0=USV^\top\) with controlled singular values, attention-weighted token-node selection, and bounded state-dependent gates.
 
\section{Controller}

Routing determines which content is transmitted between modules and in what proportion. The controller implements this as a routing field over a product of simplices; the admissibility conditions require the simplex to be positively invariant and the routing map to be Lipschitz-regular. The controller is also the site of arbitration. If the symbolic module proposes a plan and
the geometric module detects infeasibility, the controller increases right-to-left routing,
suppresses direct action, and requests replanning. If the valuative system signals urgent
harm, the controller broadens broadcast and shortens action latency. If the executive system
detects norm conflict, the controller routes other-agent geometry and relevant memories into
the symbolic module.

This module is an analog of the thalamus. In the present framework it serves as a dynamic routing system. Concretely, it receives exports from the symbolic module, geometric module, valuative system, executive system, and memory. It selects contents for broadcast, opens and closes channels, changes gain, coordinates timing, and determines which subsystem currently constrains action.

Let $Z_i$ be exported latent packets from subsystem $i$. The controller computes a sparse
routing matrix $\mathcal{R}_\controller$ and a broadcast $B_\controller$:
\[
B_\controller=\mathrm{TopS}_{s_i}\{Z_i\},
\qquad
M_{i\to j}=\mathcal{R}_{\controller,ij}\,T_{i\to j}(Z_i),
\]
where $\mathrm{TopS}$ selects the subset of subsystems with highest salience scores $s_i$,
$s_i$ is a Lipschitz function of $Z_i$ and the routing state, and $T_{i\to j}$ is a
subsystem-specific learned translation. The routing matrix is state-dependent and changes
with uncertainty, conflict, danger, novelty, and goal relevance.

\begin{blueprint}[Routing operator]\label{bp:controller}
The routing operator $K:Z\to\mathcal{R}_\controller$ belongs to the admissible class
$\mathfrak{C}_K$ if:
\begin{enumerate}[label=(\roman*)]
\item \emph{Row-stochasticity.} $\mathcal{R}_\controller\in[0,1]^{n_s\times n_s}$, with
  each row summing to one.
\item \emph{Sparsity.} Each row of $\mathcal{R}_\controller$ has support of cardinality at
  most $K_s$ for some fixed $K_s\leq n_s$.
\item \emph{Regularity.} $K$ is Lipschitz continuous in $Z$.
\end{enumerate}
\end{blueprint}

\textit{Existence argument (formal case).} For the formal Lipschitz theorem, set $K_s=n_s$.
Row-wise sparsemax \citep{martins2016softmax} applied to a score matrix
$S\in\mathbb{R}^{n_s\times n_s}$ satisfies all three conditions: row-stochasticity holds
by construction; sparsity condition~(ii) holds trivially with $K_s=n_s$; Lipschitz
continuity holds because sparsemax is a Euclidean projection onto the simplex and therefore
non-expansive with Lipschitz constant one. Row-wise softmax also satisfies conditions~(i)
and~(iii) with $K_s=n_s$.
The formal case therefore covers dense routing only: with $K_s=n_s$ every row of
$\mathcal{R}_\controller$ may have full support, making the sparsity condition vacuous.
The sparse-routing design principle is realized by the hard top-$K_s$
construction and approximately by Gumbel-Softmax, both described below. However, these fall outside
the Lipschitz conditions.

\textit{Caveat (outside the admissible class).} Hard top-$K_s$ normalization with
$K_s<n_s$ enforces strict sparsity but is discontinuous at score ties and therefore does not
satisfy condition~(iii); it is outside the admissible class $\mathfrak{C}_K$ used for the
formal results. Gumbel-Softmax with any fixed positive temperature has full support over
$n_s$ entries and therefore does not satisfy condition~(ii) exactly; it too is outside
$\mathfrak{C}_K$. Both are viable implementations, but neither is covered by
the formal propositions below. In the utilization of these constructions one must therefore independently verify any consistency properties claimed for the architecture. Sparsemax is a Euclidean projection onto the simplex.  Hence it is globally Lipschitz and
piecewise affine; its active support can change across faces of the simplex, while the map
itself remains continuous. The interconnector gate values $g_{R\to L}$ and $g_{L\to R}$ are specific entries of $\mathcal{R}_\controller$ and therefore lie in $[0,1]$ by condition~(i). 

\textit{Stability and the routing matrix.} The row-stochastic condition~(i) gives
$\|\mathcal{R}_\controller\|_\infty=1$, the natural operator norm for routing. A spectral
norm bound for routing is a separate architectural condition. The closed principal stability
theorem below absorbs only the fixed interfield operators $\Kcal$ and $\Kcal^*$ through the margin
$C_\mathcal{K}^2<\mu_L\mu_R$. If routing is added directly to the principal stability block,
then its operator norm must be included in the same block small-gain matrix. An optional
strengthening, achieved by making each row strictly sub-stochastic
($\sum_j\mathcal{R}_{ij}<1$ for each $i$), gives $\|\mathcal{R}_\controller\|_{\mathrm{op}}<1$
as an independent contractive routing condition. Perturbative attractor comparison then
requires the standard hypotheses of upper semicontinuity for the corresponding semiflows.

The discussion above results in three corresponding regimes:
\begin{enumerate}[label=(\arabic*)]
\item Use softmax or another smooth positive-temperature map. The routing map is smooth,
  the RFDE vector field is classical, and derivative-based Lyapunov arguments apply
  directly.
\item Use sparsemax. The RFDE vector field remains Lipschitz and single-valued, so
  Picard well-posedness holds.  Variational stability arguments at support boundaries use
  the Clarke generalized Jacobian $\partial_C\mathcal{R}_\controller$.
\item Use hard top-$K_s$.  This requires a separate tie-margin, hybrid, or differential
  inclusion analysis and is outside the Lipschitz conditions stated here.
\end{enumerate}
The formal derivative-based stability theorem uses regime~(1). Regime~(2) is admissible
for well-posedness and requires nonsmooth analysis for derivative-based stability.

\textit{Coarse-graining.} The coarse-graining map $\Phi_{R_\Theta}$ extracts the abstract class
$\mathcal{A}_{R_\Theta}$: weighted adjacency matrices on the subsystem graph $G_C$ with
$\|\mathcal{R}_\controller\|_{\mathrm{op}}<1$.  Broadcast implementation, phase-control
mechanism, and row-stochastic parameterization are irrelevant to $\mathcal{A}_{R_\Theta}$;
see Section~\ref{sec:coarsegrained}.

\section{Valuative System}

The valuative system should regulate gain, priority, plasticity, and action pressure by converting homeostatic deviation, prediction error, novelty, relief, and norm relevance into modulating signals.

This module is an analogue of the hypothalamus which tracks viability variables and produces drive. The analogue of the frontal system regulates inhibition, planning, and norms. Both modules together then define the valuative organization of the system. The valuative system is supposed to signal the states that matter, the errors that deserve learning, memories which deserve consolidation, and actions which should be suppressed.

Let $h$ denote homeostatic deviation, $\varepsilon_{\mathrm{pred}}$ prediction error,
$n$ novelty, $r$ outcome feedback, and $\mu$ the neuromodulatory context. The map
$\mathcal{V}$ is decomposed as $\mathcal{V}=(\mathcal{V}_Y,\mathcal{V}_\mu)$, where
$\mathcal{V}_Y$ produces the updated valuative state and $\mathcal{V}_\mu$ produces the
neuromodulatory readout. We can write the update as
\[
\valuative^+ = \mathcal V_Y(\valuative,h,\varepsilon_{\mathrm{pred}},n,r,\leftstate,\rightstate,\memory,\frontal),
\qquad
\mu=\mathcal{V}_\mu(\valuative)=(\mu_{\mathrm{DA}},\mu_{\mathrm{ACh}},\mu_{\mathrm{NE}},\mu_{5HT},\mu_{\mathrm{OP}}).
\]
Here $\mu_{\mathrm{DA}}$ is a dopamine-like value error, $\mu_{\mathrm{ACh}}$ an
acetylcholine-like precision demand, $\mu_{\mathrm{NE}}$ a norepinephrine-like unexpected
uncertainty, $\mu_{5HT}$ a serotonin-like temporal stabilization, and $\mu_{\mathrm{OP}}$
an opioid-like relief. 

\begin{blueprint}[Valuative update and neuromodulation]\label{bp:valuative}
The valuative update operator $\mathcal{V}=(\mathcal{V}_Y,\mathcal{V}_\mu)$ and
neuromodulatory map belong to the admissible class $\mathfrak{C}_V$ if:
\begin{enumerate}[label=(\roman*)]
\item \emph{Homeostatic dynamics.} The deviation variable $h$ obeys a leaky integrator:
  $h^+=h+\Delta t(-\kappa_h\,h+f_h(u))$, with $\kappa_h>0$ and $\|f_h(u)\|\leq B_u$ for
  all inputs $u$ in the perceptual domain.
\item \emph{Bounded modulation.} The neuromodulatory vector satisfies
  $\mu=(\mu_1,\ldots,\mu_5)\in(0,1)^5$.
\item \emph{Regularity.} $\mathcal{V}$ is Lipschitz continuous in all arguments.
\item \emph{Bounded valuative state.} $\|\mathcal{V}_Y(\cdot)\|\leq R_Y$ for all inputs
  in the state domain, where $R_Y$ is a finite constant depending on the architecture parameters.
\item \emph{Dissipativity of $G_Y$.} The forcing function $G_Y$ in the continuous-time
  equation $\dot{\valuative}=-\kappa_Y\valuative+G_Y(\leftstate,\rightstate,u)$ satisfies
  $\mathrm{Lip}(G_Y)<\kappa_Y$, giving one-sided dissipativity constant $\kappa_Y-\mathrm{Lip}(G_Y)>0$.
  It follows that the valuative state is a fast variable (it relaxes quickly to its
  quasi-steady state $G_Y/\kappa_Y$), enabling a slow-fast decomposition in which
  $H_L$ and $X_R$ are the slow variables (cf. Theorem~\ref{thm:slowfast}).
\end{enumerate}
\end{blueprint}

\textit{Existence argument.} Condition~(i) is satisfied by the discrete leaky integrator
$h^{t+1}=(1-\kappa_h\Delta t)h^t+\Delta t\,f_h(u^t)$.
Under the condition $0<\Delta t<1/\kappa_h$, we have that $0<1-\kappa_h\Delta t<1$, so the recursion
$\|h^{t+1}\|\leq(1-\kappa_h\Delta t)\|h^t\|+\Delta t B_u$
is contractive; its fixed point satisfies $\|h^*\|=B_u/\kappa_h$, giving
$\|h^t\|\leq\max(\|h^0\|,B_u/\kappa_h)$ for all $t\geq 0$.
(The continuous formula is $h(t)=e^{-\kappa_h t}h(0)+\int_0^t e^{-\kappa_h(t-s)}f_h(u(s))\,ds$
and the discrete estimate above is the operative bound.)
Condition~(ii) is satisfied by any strictly monotone map into the open interval $(0,1)$. A sigmoid applied to a linear function of the state vector, $\mu_k=\sigma(\mathbf{w}_k^\top z)$, satisfies
conditions~(ii) and~(iii) for any bounded weight vector $\mathbf{w}_k$ and bounded input
domain. Other maps such as softsign, $\tanh$ shifted to $(0,1)$, or any other bounded
smooth activation are equally admissible. Condition~(iii) holds for all smooth bounded
activations by standard Lipschitz estimates. Condition~(iv) is satisfied by any bounded
activation applied to the output layer of $\mathcal{V}_Y$, for instance a $\tanh$ or
clipped linear map with range $[-R_Y,R_Y]^{n_Y}$; this is an additional constraint analogous to Specification~\ref{bp:symbolic}(ii).

The valuative system modulates left attention $Q_L$, right awareness $W_R$, controller
routing $\mathcal{R}_\controller$, memory consolidation, and action selection through the
five neuromodulatory signals. Delayed credit assignment is implemented by eligibility
traces, cf. \citep{williams1992reinforce,mnih2015dqn}. For a parameter family $\theta_i$
belonging to action, attention, awareness, routing, or memory,
\[
z_i^{t+1} = \lambda_i z_i^t + \nabla_{\theta_i}\log\pi_i^t,
\qquad
\Delta\theta_i^t = \eta_i\,\delta^t\,z_i^t.
\]
Here $\nabla_{\theta_i}\log\pi_i^t$ denotes the REINFORCE score function evaluated at the
action $A^t$ actually taken: for the action policy this is
$\nabla_{\theta_{\mathrm{act}}}\log p(A^t\mid\tilde{Z}^t;\theta_{\mathrm{act}})$, and
analogously for each parameter family at the sample drawn at step $t$.
Both the trace update and the parameter update use $z_i^t$, the trace from step $t$, as opposed to the just-computed $z_i^{t+1}$; this is the one-step-delayed convention used throughout.
The traces $z_i$ retain recent causes. The signal $\delta$ supplies delayed evaluation.
This allows the system to learn which attentional choices, awareness choices, routes,
memories, and actions contributed to later success, error, harm, or relief.

The five parameter families with their associated distributions $\pi_i$ and downstream
effects are:
\begin{itemize}
\item $\theta_{\mathrm{att}}$: attention weights in the symbolic module;
  $\pi_{\mathrm{att}}$ is the attention distribution over token positions; updates shift
  the precision-modulated attention distribution and hence $Q_L$.
\item $\theta_{\mathrm{aw}}$: awareness scoring weights in the geometric module;
  $\pi_{\mathrm{aw}}$ is the awareness distribution over edges; updates shift $W_R$.
\item $\theta_{\mathrm{rt}}$: routing weights in the controller; $\pi_{\mathrm{rt}}$ is
  the routing distribution over subsystems; updates shift $\mathcal{R}_\controller$.
\item $\theta_{\mathrm{act}}$: action policy weights; $\pi_{\mathrm{act}}=p(A\mid\cdot)$;
  updates shift the action distribution directly.
\item $\theta_{\mathrm{mem}}$: memory read and write weights;
  $\pi_{\mathrm{mem}}=p(\mathrm{read}\mid\cdot)$; updates shift which memories are
  consolidated or retrieved.
\end{itemize}

\begin{blueprint}[Policy distributions and credit signal]\label{bp:policy}
The policy distributions $\pi_i$ and credit signal $\delta$ belong to the admissible class
$\mathfrak{C}_\pi$ if:
\begin{enumerate}[label=(\roman*)]
\item \emph{Piecewise smoothness.} Each $\pi_i$ is Lipschitz continuous in $\theta_i$ and
  differentiable almost everywhere. The Clarke subdifferential $\partial_C\pi_i(\theta_i)$ is
  nonempty, compact, convex, and upper semicontinuous at every $\theta_i$, as guaranteed for
  any locally Lipschitz map, cf. \citep{clarke1990optimization}.
\item \emph{Trace decay.} $\lambda_i\in(0,1)$ for each $i$.
\item \emph{Bounded credit.} $\delta$ is a Lipschitz function of the valuative state
  $\valuative$ taking values in $[-D,D]$ for some fixed $D>0$. The signal may be signed
  to represent reward prediction error; the bound $D$ is an architecture constant.
\end{enumerate}
\end{blueprint}

\textit{Existence argument.} Condition~(i): the softmax distribution
$\pi_i=\mathrm{softmax}(\ell(\theta_i))$ is real-analytic and strictly positive everywhere,
so it satisfies~(i) as a special case. The sparsemax distribution is piecewise linear and
hence piecewise smooth; at support boundaries the Clarke subdifferential exists and is
non-empty because sparsemax is Lipschitz (it is a Euclidean projection onto the simplex), cf.
\citep{clarke1990optimization,martins2016softmax}. Any other piecewise-smooth projection
onto the simplex also satisfies~(i). When sparsemax is used in the eligibility trace update
$z_i^+=\lambda_i z_i+\nabla_{\theta_i}\log\pi_i$, the log-policy is undefined at actions
assigned zero probability. The canonical remedy is $\varepsilon$-flooring:
$\pi_i^\varepsilon=(1-\varepsilon)\,\mathrm{sparsemax}(\ell(\theta_i))+\varepsilon\,\mathrm{Unif}$,
which is strictly positive so $\log\pi_i^\varepsilon$ is finite everywhere. The gradient is
piecewise $C^\infty$ on each smooth piece of sparsemax but remains undefined at support
boundaries even after flooring which preserves positivity of $\pi_i^\varepsilon$. The floored policy lies in $\mathfrak{C}_\pi$ and converges to the unmodified sparsemax as $\varepsilon\to 0$.
Condition~(ii) is a parameter specification:
$\lambda_i\in(0,1)$ is set by hand or by meta-learning. Condition~(iii) is satisfied
by $\delta=\mu_{\mathrm{DA}}$, which lies in $(0,1)$ by Specification~\ref{bp:valuative}(ii)
and is Lipschitz in $\valuative$ by Specification~\ref{bp:valuative}(iii). More generally,
any reward prediction error $\delta=r-\hat{r}$ clipped to a bounded range is admissible.

\begin{figure}[ht]
\centering
\begin{tikzpicture}[
  box/.style={draw, rounded corners, align=center, minimum width=29mm, minimum height=9mm,
    font=\small},
  arrow/.style={-{Latex[length=2mm]}, thick},
  loop/.style={-{Latex[length=2mm]}, thick, bend left=18}
]
\node[box] (att) at (0,0) {Attention / Awareness\\$Q_L,W_R$};
\node[box] (route) at (4,0) {Routing\\$\mathcal R_\Theta$};
\node[box] (act) at (8,0) {Action\\$A$};
\node[box] (out) at (8,-2.2) {Outcome\\$r,\varepsilon_{\mathrm{pred}},h$};
\node[box] (val) at (4,-2.2) {Valuative Error\\$\delta$};
\node[box] (elig) at (0,-2.2) {Eligibility Traces\\$z_i$};
\node[box] (learn) at (4,-4.1) {Parameter Update\\$\Delta\theta_i$};
\draw[arrow] (att) -- (route);
\draw[arrow] (route) -- (act);
\draw[arrow] (act) -- (out);
\draw[arrow] (out) -- (val);
\draw[arrow] (att) -- (elig);
\draw[arrow] (route) -- (elig);
\draw[arrow] (act) -- (elig);
\draw[arrow] (elig) -- (learn);
\draw[arrow] (val) -- (learn);
\draw[arrow] (learn.west) .. controls (-1,-4) and (-1,0) .. (att.west);
\draw[arrow] (learn.north east) to[out=70, in=290] (route.south east);
\draw[arrow] (learn.east) .. controls (9.2,-4) and (9.2,0) .. (act.east);
\end{tikzpicture}
\caption{Delayed credit assignment. Outcomes update the prior choices of attention,
awareness, routing, and action through eligibility traces.}
\end{figure}

Each update $\Delta\theta_i=\eta_i\,\delta\,z_i$ modifies the weights used in the next
forward pass, closing the feedback loop from valuative error into the field variables
$Q_L$, $W_R$, $\mathcal{R}_\controller$, $A$, and $\memory$. The three precision and routing
fields are modulated by the neuromodulatory signals: $\mu_{\mathrm{ACh}}$ sharpens symbolic
attention as specified in Specification~\ref{bp:symbolic}(vi); $\mu_{\mathrm{NE}}$ shifts
geometric awareness logits; $\mu_{\mathrm{DA}}$ shifts routing salience globally,
identifying $\delta=\mu_{\mathrm{DA}}$ as the common credit signal across parameter families,
weighted by individual learning rates $\eta_i$ and eligibility decays $\lambda_i$.

\textit{Viability for traces and policy parameters.}
The continuous-time eligibility trace equation is $\dot z_i=-\lambda_z z_i+s_i(Z_t)$, where
$s_i$ is the mean-field score function bounded by $\|s_i(Z_t)\|\le S_i$.  At $\|z_i\|=R_{z,i}$,
the inner product satisfies
$\langle z_i,\dot z_i\rangle\le-\lambda_z R_{z,i}^2+R_{z,i}S_i\le0$
whenever $\lambda_z R_{z,i}\ge S_i$.  This is the \emph{trace radial margin condition}: the
leakage rate $\lambda_z$ must dominate the maximum score magnitude normalized by the trace
radius.  The policy parameter equation is $\dot\theta_i=-\lambda_\theta\theta_i+\eta_i\delta z_i$,
viable in $B_{R_{\theta,i}}$ when $\lambda_\theta R_{\theta,i}\ge\eta_i D R_{z,i}$.  If these
margin conditions are not imposed by design, the projected form
$\dot z_i=\Pi_{T_{B_{R_{z,i}}}(z_i)}(-\lambda_z z_i+s_i)$ establishes viability unconditionally.

\textit{Convergence characterization.} The RFDE analysis treats the score-function
estimator $\nabla_{\theta_i}\log\pi_i$ as a deterministic gradient field, corresponding
to the mean-field approximation in which eligibility traces track the expected gradient.
A stochastic treatment would model $\nabla_{\theta_i}\log\pi_i(A^t)$ as a random
variable and apply stochastic RFDE theory, cf. \citep{mao2007stochastic}. The deterministic
attractor result then applies to the expected dynamics. Under the specification conditions
and the mean-field approximation, the eligibility trace update
$z_i^+=\lambda_i z_i+\nabla_{\theta_i}\log\pi_i^\varepsilon$ is contractive
in $z_i$ with rate $\lambda_i\in(0,1)$, and the parameter update with weight decay is
contractive in $\theta_i$ under $\eta_i\lambda_{\mathrm{reg}}<1$. The combined
$(Z,\theta)$ system is an RFDE on an extended state space, and the global attractor of
Theorem~\ref{thm:attractor} applies to this extended system. The attractor projects onto
the set $\{\theta:\mathbb{E}[\delta z_i]=0\}$, which is the REINFORCE stationarity
condition; convergence to this set holds in the mean-field (expected-gradient) sense.

\textit{Coarse-graining.} The coarse-graining maps $\Phi_Y$ and $\Phi_\theta$ extract the
abstract classes $\mathcal{A}_Y$ (scalar forcing $G_Y$ with $\mathrm{Lip}(G_Y)<\kappa_Y$)
and $\mathcal{A}_\theta$ (contractive gradient flow on $\Theta$ with
$\eta_i\lambda_{\mathrm{reg}}<1$).  Neuromodulator identities ($\mu_{\mathrm{ACh}}$,
$\mu_{\mathrm{NE}}$, $\mu_{\mathrm{DA}}$), homeostatic targets, and score-function estimator
are irrelevant to the abstract classes; see Section~\ref{sec:coarsegrained}.

\section{Metacognition}

Metacognition, as developed here, arises from self and world models that predict internal and external dynamics. These models provide second-order variable such as reliability, expected value of computation and internal conflict that are used to meta-learn the control of routing, learning, attention, and action. Meta-control chooses internal actions: attend, scan, query memory, simulate, or consolidate - as well as inhibit, replan, or request additional perception. It also adjusts learning rates and routing sparsity. The system thereby learns how to learn and how to regulate its own computation. Metacognition is, at its core, the controlled use of a self-world model to alter future cognition and behavior.

A world model predicts the effects of action on the environment, see e.g. \citep{ha2018world,
hafner2020dream}. A self model predicts the effects of internal and external action on the
system's own future state. These models introduce the second-order variables: reliability, confidence, calibration, and internal conflict, among other second-order variables such as bias and the expected value of further computation.

Let $\world$ predict external change and $\selfmodel$ predict internal change. Let $\rho_i$
denote the context-dependent reliability of subsystem $i$. The metacognitive layer is
summarized by
\[
\widehat{S}_{\mathrm{ext}}^+ = \world(S_{\mathrm{ext}},A,\state),
\qquad
\widehat{\state}_{\mathrm{int}}^+ = \selfmodel(\state_{\mathrm{int}},A,B_\controller),
\qquad
\rho_i^+ = (1-\alpha)\rho_i+\alpha\,f(\varepsilon_i),
\]
where $\varepsilon_i$ is the prediction error of subsystem $i$ and $f$ is a reliability
kernel specified in the specification below. The world model applies the geometric and symbolic
update rules under a hypothetical action $A$:
\[
\widehat{\rightstate}^+ = \mathcal{W}_{\mathrm{geom}}(\rightstate,A,B_\controller),
\qquad
\widehat{\leftstate}^+ = \mathcal{W}_{\mathrm{sym}}(\leftstate,A,B_\controller).
\]
The world-model prediction error is $\varepsilon_{\mathcal{W}}^t=\widehat{\rightstate}^+
-\rightstate^{t+1}$, where $\widehat{\rightstate}^+$ was predicted at step $t$ using $Z^t$
and $A^t$, and $\rightstate^{t+1}$ is the observation available at the end of step $t$.
This error is therefore a stagewise-derived scalar at step $t$, available for computing
$\rho_i^{t+1}$ in Stage~8.2. The normalized magnitude enters the neuromodulatory signals. The
self model predicts the next internal regulation state:
\[
\widehat{\state}_{\mathrm{int}}^+ =
\selfmodel\!\bigl((Q_L,W_R,\mathcal{R}_\controller,\valuative,\frontal,\memory),
\,A,\,B_\controller\bigr),
\]
with self-model error $\varepsilon_{\mathcal{S}}^t=\widehat{\state}_{\mathrm{int}}^+
-\state_{\mathrm{int}}^{t+1}$; the same timing convention applies. The index $i$ runs over $\{\mathcal{W}_{\mathrm{geom}},
\mathcal{W}_{\mathrm{sym}},\mathcal{S},\Phi_{R\to L},\Phi_{L\to R}\}$.

\begin{blueprint}[Reliability update]\label{bp:reliability}
The reliability update belongs to the admissible class $\mathfrak{C}_\rho$ if:
\begin{enumerate}[label=(\roman*)]
\item \emph{Range.} $\rho_i\in[0,1]$ for all $i$ and all times.
\item \emph{Convex form.} $\rho_i^+=(1-\alpha)\rho_i+\alpha\,f(\varepsilon_i)$ for some
  fixed $\alpha\in(0,1)$ and some kernel $f:\mathbb{R}^{d_i}\to[0,1]$.
\item \emph{Radial monotonicity.} $f(\varepsilon)=\varphi(\|\varepsilon\|/\sqrt{d_i})$ for
  some non-increasing function $\varphi:[0,\infty)\to[0,1]$: reliability decreases as
  normalized prediction error grows.
\end{enumerate}
\end{blueprint}

\textit{Existence argument.} Condition~(i) follows from condition~(ii): the convex
combination $(1-\alpha)\rho_i+\alpha f(\varepsilon_i)$ of values in $[0,1]$ lies in
$[0,1]$ by the specification requirement $\alpha\in(0,1)$ and $f:\mathbb{R}^{d_i}\to[0,1]$.
Condition~(iii) restricts to radially non-increasing kernels; examples include
$f(\varepsilon)=\exp(-\|\varepsilon\|^2/d_i)$, $f(\varepsilon)=(1+\|\varepsilon\|/\sqrt{d_i})^{-1}$,
and $f(\varepsilon)=\max(0,\,1-\|\varepsilon\|/(\sqrt{d_i}\,\sigma))$ for a threshold
$\sigma>0$. The normalization by $\sqrt{d_i}$ is essential: without it the exponential
kernel $\exp(-\|\varepsilon\|^2)$ collapses toward zero in high-dimensional spaces for any
fixed per-dimension error, causing reliability to be insensitive to prediction quality.
With normalization, $f(\varepsilon)$ depends on the mean squared error per component, which
is the statistically appropriate measure of model accuracy and is bounded away from zero
for any accurate predictor regardless of $d_i$.

\textit{Viability.} The continuous-time reliability field is the relaxation flow
\[
  \dot\rho_i=\alpha_i\bigl(f(\varepsilon_i)-\rho_i\bigr),\qquad \alpha_i>0.
\]
Since $f(\varepsilon_i)\in[0,1]$, at $\rho_i=0$ one has $\dot\rho_i=\alpha_i f(\varepsilon_i)\ge0$
and at $\rho_i=1$ one has $\dot\rho_i=\alpha_i(f(\varepsilon_i)-1)\le0$.  Both box-cone
conditions (Lemma~\ref{lem:viability}(ii)) are satisfied, verifying (A4) for
$\rho_i\in[0,1]$.

Reliability feeds back into routing and translation: the salience of subsystem $i$ is
augmented by $\beta_\rho\rho_i$, and the translation gates are attenuated in proportion to
translation reliability, so that unreliable channels are progressively closed and reliable
subsystems receive stronger routing priority.

\section{Memory}

Memory consolidation occurs selectively: the gate $g_M$ determines which symbolic and valuative content is written to the persistent field $M$, and the consolidation rate is calibrated by the executive state. The admissibility conditions formalize this selectivity.

The memory field $\memory$ accumulates content from the symbolic and valuative systems and
makes it available to all modules. Its update has the form
\[
\memory^{t+1} = (1-g_\memory)\,\memory^t + g_\memory\,\Phi_\memory(\leftstate^{t+1},
\valuative^{t+1}),
\]
where $g_\memory\in(0,1)$ is a consolidation gate and $\Phi_\memory$ is a learned write
projection.

\begin{blueprint}[Memory update]\label{bp:memory}
The memory update belongs to the admissible class $\mathfrak{C}_\memory$ if:
\begin{enumerate}[label=(\roman*)]
\item \emph{Gated consolidation.}
  $\memory^{t+1}=(1-g_\memory)\memory^t+g_\memory\Phi_\memory(\leftstate^{t+1},
  \valuative^{t+1})$ with $g_\memory\in[\varepsilon_M,1-\varepsilon_M]$ for some fixed
  $\varepsilon_M\in(0,\tfrac{1}{2})$.
\item \emph{Gate regularity.} $g_\memory$ is a Lipschitz function of $Z$.
\item \emph{Bounded write.} $\|\Phi_\memory(\cdot)\|\leq C_\memory$ for all inputs in
  the state domain.
\end{enumerate}
\end{blueprint}

\textit{Existence argument.} The gated consolidation form already satisfies condition~(i) by definition because it is a parameterized family that includes the GRU gated update and the write gate of differentiable neural computers as instances of this family. Any such form satisfies~(i) by
construction. Condition~(ii) is satisfied whenever the gate is computed by a bounded
differentiable function of $Z$, such as a sigmoid applied to a linear form. The source of
the gate signal, i.e. valuative state, content novelty, executive instruction, or any
combination, is an implementation choice within $\mathfrak{C}_\memory$. Condition~(iii) is satisfied by any bounded linear write projection $\Phi_\memory$. This includes a learned matrix applied to the concatenation $[\leftstate^{t+1};\valuative^{t+1}]$ after mean pooling.

\textit{Viability.} The continuous-time memory field is the relaxation flow
\[
  \dot\memory = g_\memory(Z_t)\bigl(\Phi_\memory(Z_t)-\memory\bigr),\qquad g_\memory\ge0.
\]
At $\|\memory\|=R_M$, the inner product $\langle\memory,\dot\memory\rangle=
g_\memory(\langle\memory,\Phi_\memory\rangle-\|\memory\|^2)\le
g_\memory(R_M^2-R_M^2)=0$ by condition~(iii) and the Cauchy-Schwarz inequality.  Hence
the ball $B_{R_M}$ is positively invariant, verifying (A4) for the memory component.

\section{Executive System}
The executive module implements a feedback loop that overrides stimulus-driven impulses in favor of planned, value-driven behaviors. Its admissibility conditions require dissipativity and bounded range, which together ensure the executive state converges independently of the interfield coupling.

\begin{blueprint}[Executive state update]\label{bp:executive}
The executive update operator $\mathcal{P}$ belongs to the admissible class $\mathfrak{C}_P$
if:
\begin{enumerate}[label=(\roman*)]
\item \emph{Update form.} $\frontal^{t+1}=\mathcal{P}(\frontal^t,\{\Delta\theta_i^t\},
  \valuative^{t+1})$ for some operator $\mathcal{P}$; the inputs are the executive state at
  $t$, the parameter deltas from Stage~8.3, and the valuative state from Stage~6 in the Formal Closure section.
\item \emph{Regularity.} $\mathcal{P}$ is Lipschitz continuous in all arguments.
\item \emph{Bounded range.} $\|\mathcal{P}(\cdot)\|\leq R_P$ for all inputs in the state
  domain.
\item \emph{Dissipativity.} The executive reaction term $\mathcal{P}$ is one-sided Lipschitz
  with constant $-\mu_P<0$: for all $u,v$ in the state domain,
  \[
    \langle\mathcal{P}(u)-\mathcal{P}(v),\,u-v\rangle\;\leq\;-\mu_P\|u-v\|^2.
  \]
  This forces the executive state $P$ to converge to its equilibrium value independently,
  with dissipativity constant $\mu_P>0$ (see Theorem~\ref{thm:stability}).
\end{enumerate}
\end{blueprint}

\textit{Existence argument.} A simple sufficient construction consists of the application of a linear projection to the concatenation of the inputs, followed with a bounded activation such as $\tanh$ (with output clipped to
$[-R_P,R_P]^{n_P}$). This satisfies conditions~(i)--(iii) under bounded input domains and
weight norms bounded as in the Standing Assumption. Dissipativity condition~(iv) is
satisfied by the implementation $\mathcal{P}(x)=-\mu_P x+\phi(W_P x)$, where $\phi$ is
$1$-Lipschitz (e.g.\ $\tanh$ applied coordinate-wise) and $\|W_P\|_{\mathrm{op}}<\mu_P$.
Then:
\begin{align*}
\langle\mathcal{P}(u)-\mathcal{P}(v),u-v\rangle
  &= -\mu_P\|u-v\|^2+\langle\phi(W_P u)-\phi(W_P v),u-v\rangle\\
  &\leq -\mu_P\|u-v\|^2+\|W_P\|_{\mathrm{op}}\|u-v\|^2
  \;\leq\; -(\mu_P-\|W_P\|_{\mathrm{op}})\|u-v\|^2\;<\;0.
\end{align*}
The condition $\|W_P\|_{\mathrm{op}}<\mu_P$ is achievable by spectral normalization of $W_P$.


\section{Coarse Graining}\label{sec:coarsegrained}

A coarse-graining replaces each of the specification conditions with an abstract operator class defined by dissipativity constants, symmetry, compact viability, and spectral conditions. 

\subsection*{Coarse-graining maps}

For each specification $i$, the \emph{coarse-graining map}
$\Phi_i\colon\mathcal{C}_i\to\mathcal{A}_i$
extracts the field-theoretic content of the admissibility class $\mathcal{C}_i$ as an
abstract operator class $\mathcal{A}_i$.  Each $\mathcal{A}_i$ specifies: (a) the function
space the operator acts on; (b) its dissipativity or spectral constant; (c) its symmetry
group. Architectural or microscopic details (layer count, head count, edge feature dimension) are not visible in $\mathcal{A}_i$.

\subsection*{Abstract operator classes}

The nine specification operator classes are:
\begin{itemize}
\item $\mathcal{A}_L$: dissipative operators on $L^2(\GL,\mathbb{R}^{d_L})$,
  with $\mathrm{Aut}(\GL)$-equi\-va\-ri\-ance and dissipativity constant $\mu_L>0$.
\item $\mathcal{A}_R$: dissipative operators on $L^2(\GR,\mathbb{R}^{d_R})$ with
  $\mathrm{SE}(d)$-invariance in the scalar feature sector, $\mathrm{Aut}(\GR)$-equivariance, and dissipativity constant $\mu_R>0$.
\item $\mathcal{A}_\Kcal$: Hilbert-Schmidt bipartite operators
  \begin{equation*}
    \Kcal\colon L^2(\GR,\mathbb{R}^{d_R})\to L^2(\GL,\mathbb{R}^{d_L}),\quad
    \|\Kcal\|_{\mathrm{HS}}\leq C_\Kcal.
  \end{equation*}
\item $\mathcal{A}_{Q_L}$: edge weight functions $w_L\colon E_L\to\mathbb{R}_{>0}$ with
  $\inf w_L\geq\varepsilon_Q$.
\item $\mathcal{A}_{W_R}$: edge weight functions $w_R\colon E_R\to\mathbb{R}_{\geq 0}$
  on simplices with connected support.
\item $\mathcal{A}_{R_\Theta}$: weighted adjacency matrices on the subsystem graph $G_C$ with
  $\|R_\Theta\|_{\mathrm{op}}<1$.
\item $\mathcal{A}_Y$: scalar forcing functions $G_Y$ satisfying $\mathrm{Lip}(G_Y)<\kappa_Y$.
\item $\mathcal{A}_\theta$: contractive gradient flows on the parameter manifold $\Theta$
  with weight decay satisfying $\eta\lambda_{\mathrm{reg}}<1$.
\item $\mathcal{A}_P$: dissipative operators on $\mathbb{R}^{n_P}$ with dissipativity
  constant $\mu_P>0$.
\end{itemize}
Any tuple $(T_L,G_R,\Kcal,Q_L,W_R,R_\Theta,G_Y,\pi_\theta,P)\in\prod_i\mathcal{A}_i$ satisfying
the RFDE assumptions defines a well-posed RFDE with a compact attractor. The closed
principal stability theorem applies to the corresponding reduced fixed-coupling regime
under the stability condition~\eqref{eq:SC}. These are compatibility classes for
assembling the master RFDE.  The classes \(\mathcal A_L\) and \(\mathcal A_R\) provide
the internally dissipative symbolic and geometric field dynamics, while
\(\mathcal A_{Q_L}\) and \(\mathcal A_{W_R}\) supply the stabilized conductance fields
whose residual backbones yield the spectral gaps entering the constants \(\mu_L\) and
\(\mu_R\).  The class \(\mathcal A_{\mathcal K}\) supplies Hilbert-Schmidt
symbolic-geometric reentrant kernels; its norm bound \(C_{\mathcal K}\) is compatible
with the principal dissipativity through the small-gain requirement
\(C_{\mathcal K}^2<\mu_L\mu_R\).  The remaining classes
\(\mathcal A_{R_\Theta},\mathcal A_Y,\mathcal A_\theta,\mathcal A_P\) govern routing,
valuation, policy adaptation, and executive modulation; their contraction,
Lipschitz, or dissipativity hypotheses ensure that these auxiliary feedback loops enrich
the dynamics without destroying boundedness or stability.  Selecting operators from all
nine classes therefore defines an admissible stabilized attention-awareness-diffusion
architecture, i.e. a block vector field
\[
\dot Z(t)=\mathcal F(Z_t,u^*)
\]
on the compact phase domain \(\mathcal Z\) with the required properties.

\bibliographystyle{plainnat}

\end{document}